\numberwithin{equation}{section}
\newtheorem{theorem}{Theorem}[section]
\newtheorem{lemma}[theorem]{Lemma}
\newtheorem{remark}[theorem]{Remark}
\newtheorem{proposition}[theorem]{Proposition}
\newtheorem{definition}[theorem]{Definition}
\newcommand{\st}{\;\vline\;}
\newcommand{\G}{\mathbb G}
\renewcommand{\H}{\mathbb H}
\newcommand{\field}[1]{\mathbb{#1}}
\newcommand{\C}{{\mathbb{C}}}
\newcommand{\id}{\mathrm{id}}
\newcommand{\ot}{{\, \otimes \, }}
\newcommand{\om}{{\omega}}
\newcommand{\vtp}{{\,\overline{\otimes}\,}}
\newcommand{\B}{{\mathcal{B}}}
\newcommand{\act}{{\curvearrowright}}
\newcommand{\sC}{\mathsf{C}}
\newcommand{\LL}{{L^{\infty}(\G)}}
\newcommand{\LO}{{{L}^{1}(\G)}}
\newcommand{\LT}{{L^{2}(\G)}}
\newcommand{\DD}{{{\ell}^{\infty}(\G)}}
\newcommand{\DO}{{{\ell}^{1}(\G)}}
\newcommand{\LLL}{{L^{\infty}(\widehat{\mathbb{G}})}}
\newcommand{\vv}{\mathrm{V}}
\newcommand{\ww}{\mathrm{W}}
\newcommand{\CC}{{\field{C}}}
\newcommand{\cst}{\ifmmode\mathrm{C}^*\else{$\mathrm{C}^*$}\fi}
\newcommand{\sA}{\mathsf{A}}
\newcommand{\hh}[1]{\widehat{#1}}
\newcommand{\FO}{\mathbb{F}O}
\newcommand{\h}{\mathbb H}
\DeclareMathOperator{\irr}{Irr}
\title
[]
{SAT actions of discrete quantum groups and minimal injective extensions of their von Neumann algebras}
\author[M. Kalantar]{Mehrdad Kalantar}
\address{Mehrdad Kalantar\\ University of Houston\\ USA}
\email{mkalantar@uh.edu}
\author [F. Khosravi]{Fatemeh Khosravi}
\address{Fatemeh Khosravi\\ Seoul National University\\ South Korea}
\address{Current address: University of Isfahan\\ Iran}
\email{f.khosravi@mcs.ui.ac.ir}
\author [M. S. M. Moakhar]{Mohammad S. M. Moakhar}
\address{Mohammad S. M. Moakhar\\ Institute for Research in Fundamental Sciences (IPM)\\Iran}
\email{m.mojahedi@ipm.ir}
\thanks{{\ }\\[-3ex]MK was partially supported by the Simons Foundation Collaboration Grant \#713667 and by the NSF Grant DMS-2155162. \\FK was supported by the National Research Foundation of Korea (NRF) grant funded by the government of Korea (MSIT) (No. 2017R1E1A1A03070510 and 2020R1C1C1A01009681)}
\begin{document}

\begin{abstract}
We introduce a natural generalization of the notion of strongly approximately transitive (SAT) states for actions of locally compact quantum groups. 
In the case of discrete quantum groups of Kac type, we show that the existence of unique stationary SAT states entails rigidity results concerning injective extensions of quantum group von Neumann algebras. 
\end{abstract}

\maketitle

\section{Introduction}\label{sect1} 

The theory of boundary actions in the sense of Furstenberg \cite{Furs} has been a central concept in ergodic theory of non-amenable groups in the past few decades.
Several components of the theory have been imported to the setting of locally compact quantum groups. 
In \cite{I}, Izumi introduced and studied noncommutative Poisson boundaries of discrete quantum groups. 
This initiated a whole body of work on concrete realization and applications of these boundary actions (see e.g. \cites{IzuNeshTus06,VV,Tom07,VaesVander08,VaesVander10,KNR2}).

Following the recent striking applications of the Furstenberg boundary actions (the topological counterpart of the Poisson boundary) in certain problems in $C^*$-algebra theory of discrete groups, the notion was extended to the discrete quantum setting in \cite{KKSV}.

Similarly to the commutative case, both concepts of noncommutative boundaries mentioned above have proven to have important applications in ergodic theory of discrete quantum groups and the structure theory of their operator algebras.

The defining feature of boundary actions is the {\it proximality} property. Roughly speaking, in the measurable setting (e.g. the Poisson boundary) this means contractibility of the boundary measure (state) along paths of the underlying random walk.
A closer look at many applications of boundary actions reveals that very often it is the contractibility property itself that yields rigidity properties associated with boundary actions.
This is a point that we would like to emphasize in this work. 
In the group setting, the notion of contractibility in the measurable setup was formalized by Jaworski in \cite{Jaw} under the term {\it strongly approximately transitive (SAT)} actions. 
In this work, we introduce a natural generalization of SAT actions in the setting of actions of locally compact quantum groups $\G$ on von Neumann algebras $N$.
The primary examples of such states, which are also our main interests, are the `Poisson boundary states' of discrete quantum groups $\G$, namely, the restriction of the co-unit of $\G$ to the space of $\mu$-harmonic elements in $\ell^\infty(\G)$, where $\mu$ is a normal state on $\ell^\infty(\G)$ (see Section~\ref{sec:SAT} for more details).

In the commutative case, there are examples of $\mu$-stationary SAT actions that are not doubly-ergodic \cite{Creu}, hence not $\mu$-boundaries \cite{Kai}. But to the best of our knowledge, those are the only known examples. This shows how close the notions are in general. Despite this, the operator theoretic nature of SAT actions offers much more flexibility, especially in the quantum setting. Moreover, SAT property can be described in commutative terms: it is the irreducibility of the action of the semigroup of ``quantum probabilities'' on $\G$ on the normal state space of the von Neumann algebra $N$. Thus, standard operator theoretic and dynamical tools could be effective in this context.
In addition, this notion makes sense for general normal states of $N$ and does not require stationarity. This could be particularly advantageous since requiring the existence of normal stationary states on noncommutative von Neumann algebras is often too restrictive.

%\medskip

The main result of this paper (Theorem~\ref{main}) is a rigidity result for injective extensions of von Neumann crossed products of Poisson boundary actions of certain discrete quantum groups, (given an inclusion $M\subseteq N$ of von Neumann algebras with $N$ injective, we say $N$ is an injective extension of $M$). This is inspired by the recent work of Hartman and the first-named author in \cite{HK}, where similar results are proven for actions of locally compact groups on their boundaries. 
The key ingredient of the proof of Theorem~\ref{main} is the fact that the `boundary state' which is always SAT, in many concrete cases, is also unique stationary. 

Examples of uniquely stationary models of noncommutative boundaries for large classes of discrete quantum groups have been proven in recent work \cites{KKSV, HHN22}.  
However, the uniqueness conditions proven in op-cit are different. More precisely, in \cite{KKSV} it is proved that the boundary state on the Gromov boundary of an orthogonal free discrete quantum group $\G$ is the unique $\mu$-stationary state for any generating $\mu\in\ell^1(\G)$. In contrast, in the more general setting of Kac type considered in \cite{HHN22}, the boundary state is proved to be the unique $\mu$-stationary state which is also $\hh\G$-invariant.
This, in our main result, translates to maximal injectivity in the category of von Neumann algebras equipped with both $\G$ and $\hh\G$ actions, or equivalently an action of the Yetter--Drinfeld algebra $D(\G)$.

To unify these setups, we define the new concept of \emph{relative Yetter--Drinfeld algebras} associated to a given closed quantum subgroup $\hh\H$ of $\hh\G$. 
This notion provides the general framework in which we prove our main theorem, and the two extreme cases, where $\hh\H$ is trivial or $\hh\G$ correspond to the above two classes of examples mentioned above.

The paper is structured as follows. In addition to this introduction, the paper consists of five more sections. In Section~\ref{sect2} we give a brief review of basic definitions and some basic facts concerning quantum groups and their actions.
In Section~\ref{sec3} we introduce the concept of relative Yetter--Drinfeld algebras, and prove several lemmas about them that we need for our main results. Although this concept is certainly of independent interest and can lead to new interesting examples of locally compact quantum groups, we restrict ourselves to only the facts we require for the purpose of our main results.
In Section~\ref{sec:SAT}, we introduce SAT actions of locally compact quantum groups.
We generalize several main properties of SAT actions of locally compact groups to the quantum setting.
Section~\ref{uss} contains our main general result Theorem~\ref{main} where we prove for $(\G,{\H})$-Yetter--Drinfeld algebras $N$ satisfying some extra conditions, the inclusion $\LLL\subset \G\ltimes_{\alpha} N$ does not admit any proper ${\hh{\mathbb{H}}}$-injective intermediate von Neumann subalgebras. 
The main applications of Theorem~\ref{main} are proven in Section~\ref{ex}. %, where we prove minimal/maximal injectivity result for inclusions 
 \\

\noindent
\textbf{Acknowledgements.}
We are grateful to the anonymous referee for their insightful comments and suggestions which, in particular, resulted in substantial improvement of the results and the presentation of the paper.
We also thank Sergey Neshveyev and Adam Skalski for their helpful comments.

\section{Preliminaries}\label{sect2}
In this section, we establish our notation, and briefly review some basic definitions and results concerning locally compact quantum groups and their actions. For more details about the theory of locally compact quantum groups, we refer the reader to \cites{KV,KVvN}. Most parts of this paper are focused on discrete quantum groups, and all the facts we use concerning discrete and compact quantum group may be found in \cites{I, Ne, VV}.

\subsection*{Locally compact quantum groups}
A von Neumann algebraic locally compact quantum (lcq) group is a quadruple $\mathbb{G} = (\LL, \Delta^\G,\varphi^\G,\psi^\G)$, where $\LL$ is a von Neumann algebra with a coassociative comultiplication $\Delta^\G\colon\LL\to\LL\vtp\LL$, and $\varphi^\G$ and $\psi^\G$ are, respectively, normal semifinite faithful (n.s.f.) left and right Haar weights on $\LL$. 
The GNS Hilbert space of the left Haar weight $\varphi^\G$ will be denoted by  $\LT$ and we write $\LO$ for the predual of the von Neumann algebra $\LL$. 

We denote by $\ww^\G,\vv^\G\in B(\LT\otimes \LT)$ the left and right  multiplicative unitaries of $\G$, respectively. They implement the comultiplication: $\Delta^\G(x)=\vv^\G(x\otimes 1)\vv^{\G^*}=\ww^{\G^*}(1\otimes x)\ww^\G$ for $x\in\LL$. Moreover,
$$\LL =\bigl\{ (\omega\otimes\id)\vv^\G\st\omega\in B(\LT)_*\bigr\} ^{\prime\prime}=\bigl\{ (\id\otimes\omega)\ww^\G\st\omega\in B(\LT)_*\bigr\}^{\prime\prime} .$$

%%%%oct8
The norm closure $\bigl\{ (\id\otimes\omega)\ww^\G\st\omega\in B(\LT)_*\bigr\}^{\|\cdot\|} \subseteq \LL$ is denoted by $C_0(\G)$. The restriction of $\Delta^\G$ to the $C^*$-algebra $C_0(\G)$ defines a comultiplication $\Delta^\G: C_0(\G)\to M(C_0(\G)\ot_{\rm min}C_0(\G))$, turning it into a \emph{$C^*$-algebraic locally compact quantum group}.% (see e.g. \cite{}).

To every lcq group $\G$ there also associates a \emph{universal} $C^*$-algebra $C_0^u(\G)$, which is a $C^*$-algebra extension of $C_0(\G)$, and is endowed with a comultiplication $\Delta^{\G^u}: C_0^u(\G)\to M(C_0^u(\G)\ot_{\rm min}C_0^u(\G))$, %that turns it into a $C^*$-algebraic lcq group. D
and denoting by $\Lambda_\G: C_0^u(\G)\to C_0(\G)$ the \emph{canonical} surjective $*$-homomorphisms, we have $\Delta^\G\circ\Lambda_\G = (\Lambda_\G\ot \Lambda_\G)\circ  \Delta^{\G^u}$. %(See \cite{} for more details.)
%%%%oct8

%\end{align*}
%
The dual of a lcq group $\G$, will be denoted by $\hh\G$. We have
%\begin{align*} 
$\LLL=\bigl\{( \id\otimes\omega)\ww^{\hh\G}\st\omega\in B(\LT)_*\bigr\}^{\prime\prime}$, $\Delta^{\hh\G}(x)=(\ww^{\hh\G})^*(1\otimes \hh x){\ww^{\hh\G}}$ for $\hh x\in\LLL$,
%\end{align*} 
where $\ww^{\hh\G}:=\sigma({\ww^{\G}})^* $ and $\sigma$ is the flip operator. We have $\ww^\G\in\LL\vtp\LLL$ and $\vv^\G\in L^\infty(\hh\G)^\prime\vtp\LL$.

The opposite quantum group $\G^{\text {op}}$ of $\G$ is defined by $L^\infty(\G^{\text {op}})=\LL$, and $\Delta^{\G^{\text {op}}}(\cdot)=\sigma\circ\Delta^\G(\cdot)$, and the commutant quantum group $\G^\prime$ by $L^\infty(\G^{\prime})=\LL^\prime$, with $\Delta^{\G^\prime}(\cdot)=(J^\G\otimes J^\G)(\Delta^\G( J^\G  \cdot J^\G )) (J^\G \otimes J^\G)$, where $J^\G$ is the modular conjugation of $\varphi^\G$ (see \cite{KVvN} for more details).
We have a canonical identification $\hh{\G^{\text {op}}}\cong {\hh\G}'$.

When $\G$ is commutative, i.e. a locally compact group $G$, then $\LL=L^\infty(G)$, $\varphi^\G$ and $\psi^\G$ are the left and right Haar integrals, respectively, and $C_0^u(\G) = C_0(\G) = C_0(G)$ is the $C^*$-algebra of continuous functions on $G$ vanishing at infinity.
We have $\LLL=VN(G)$ is the (left) group von Neumann algebra of $G$ and $C_0(\hh\G) = C_r^*(G)$ is the reduced $C^*$-algebra of $G$, and $C_0^u(\hh\G) = C^*(G)$ is the full $C^*$-algebra of $G$.

A lcq group $\G$ is said to be \emph{compact} if the Haar weights are finite, which is equivalent to the $C^*$-algebra $C_0(\G)$ being unital. We say $\G$ is \emph{discrete} if its dual $\hh\G$ is compact. 
A compact quantum group $\G$ is of \emph{Kac type} if $\varphi^\G$ is a trace, and a discrete quantum group $\G$ is of {Kac type} if $\hh\G$ is of {Kac type}.

%%%%%%%%%%%%%

\subsection*{Actions of lcq groups} 
For general theory of $C^*$ and von Neumann algebraic actions of lcq groups $\G$ and their associated crossed product constructions, 
%and recall some well-known facts that we will use in this paper. We 
we refer the reader to \cites{Ind, V}. 
%for more details on these notions.
Also, more details on the facts we use regarding actions of discrete quantum groups can be found in \cite[Section 2.5]{KKSV}. We briefly recall some definitions and basic facts.

Let $\G$ be a lcq group. A \emph{left action} of $\G$ on a 
%	\begin{itemize}
%		\item 
$\cst$-algebra $\sA$ is an injective morphism $\alpha: \sA\to M(\C_0(\G)\otimes \sA)$ such that $(\Delta^\G\otimes\id)\,\alpha=(\id\otimes\alpha)\,\alpha$. The action $\alpha$ is said to be continuous if $\alpha(\sA)(C_0(\G)\otimes 1) = C_0(\G) \otimes \sA$. We say $\sA$ is a (left) $\G$-$C^*$-algebra if it is equipped with a continuous left action $\alpha$ of $\G$. 

If $\sA$ is a unital $\G$-$C^*$-algebra, then %continuity of the action implies that 
for any %$\om \in C_0(\G)^*$, the map $(\om\ot\id)\,\alpha: \sA\to \sA$ is well-defined. Indeed, given 
$a\in\sA$ and $\om\in \C_0(\G)^*$, we have $(\om \otimes \id)\alpha(a)\in \sA$; indeed, for every $x\in C_0(\G)$ we have $\alpha(a)(x\otimes1)\in C_0(\G) \otimes \sA$, and so $((x\cdot\om^\prime) \otimes \id)\alpha(a)\in \sA$ for every $\om^\prime\in \C_0(\G)^*$, where $x\cdot\om^\prime\in \C_0(\G)^*$ is defined by $x\cdot\om^\prime(y) = \om^\prime(yx)$ for all $y\in \C_0(\G)$. The set $\{x\cdot\om^\prime \colon x\in C_0(\G), \om^\prime\in \C_0(\G)^*\}$ spans a norm-dense subspace of $\C_0(\G)^*$ (see e.g. either the proof of \cite[Result 3.4]{KV} or \cite[Theorem 2.4]{Runde09}), hence the claim follows.

It turns out that any action of a discrete quantum group $\G$ on a unital $C^*$-algebra $\sA$ is automatically continuous (see e.g. the last part of the proof of \cite[Theorem 4.9]{KKSV}).

A {left action} of $\G$ on a von Neumann algebra $N$ is a unital injective normal $*$-homomorphism $\alpha:N\to\LL\vtp N$ such that $(\Delta^\G\otimes\id)\,\alpha=(\id\otimes\alpha)\,\alpha$. In this case, we say $N$ is a $\G$-von Neumann algebra.
The right actions of lcq groups on $C^*$-algebras and von Neumann algebras are defined similarly. 

	%\end{itemize}
Let $N$ be a $\G$-von Neumann algebra with the associated action $\alpha$. A von Neumann subalgebra $M\subseteq N$ is said to be {\it $\G$-invariant} if $\alpha(M) \subseteq \LL\vtp M$; in this case, $M$ is a $\G$-von Neumann algebra with respect to the restriction of $\alpha$ to $M$.

Similarly, a $C^*$-subalgebra $\sA\subseteq N$ is said to be {\it $\G$-invariant} if $\sA$ is a $\G$-$C^*$-algebra with respect to the  restriction of $\alpha$ to $\sA$. It should be noted that $M(\C_0(\G)\otimes \sA)\subset \LL\vtp N$, thanks to the following general fact: if $\sC$ is a $C^*$-subalgebra of a von Neumann algebra $P$, then $P$ contains the multiplier algebra $M(\sC)$ of $\sC$ (see e.g. Section 3.12 of \cite{Ped-book}, in particular Proposition~3.12.3 in loc. cit.). %We thank Adam Skalski for suggesting the following simple argument for the claim: indeed, we may assume $\sC$ is weak* dense in $M$. Taking a non-degenerate representation of $M$ on a Hilbert space $H$, it also gives a non-degenerate representation $\sC\subset B(H)$. Then $M(\sC) =\{x\in B(H) : xc, cx \in \sC ~\text{ for all }~ c\in \sC\}$. Now, taking a net $(c_i)$ in the unit ball of $\sC$ converging weak* to $1_M$, for every $x\in M(\sC)$ we have $x = {\rm weak}^*-\lim_ic_ix \in M$.

%%If $N$ is a $\G$-von Neumann algebra and $\sA\subseteq N$ is a $\G$-invariant $C^*$-subalgebra, then $(\mu\ot\id)\alpha(a)\in \sA$ for all $a\in \sA$ and $\mu\in N_*$, as noted above.

A weight $\tau$ on $N$ is said to be $\alpha$-invariant if $\tau\left( (\omega\otimes\id)\alpha(x)\right) = \tau(x)$ for every normal state $\omega$ on $N$ and every positive $x\in N$ with $\tau(x)<\infty$.

The crossed product  of the action $\alpha$ of $\G$ on a von-Neumann algebra $N$ is defined by
$$
\G\ltimes_{\alpha}N=\{\alpha(N)(\LLL\vtp \C) \}^{''}\subseteq B(\LT)\otimes N. 
$$
It admits a left $\G$-action 
%\begin{align}\label{act}
\begin{equation}\label{beta}
\tilde\alpha: \G\ltimes_{\alpha}N \to \LL \vtp \G\ltimes_{\alpha}N, \qquad \tilde\alpha(z)=(\ww^{\G}_{12})^*z_{23}\ww^{\G}_{12} 
\end{equation}
%\end{align}
and a right $\hh\G$-action
%\begin{align}
\begin{equation}\label{act2}
\widehat{\alpha}:\G\ltimes_{\alpha}N\to \G\ltimes_{\alpha}N\vtp\LLL,\qquad \widehat{\alpha}(z)= \vv^{\hh\G}_{13}z_{12}(\vv^{\hh\G})^*_{13} .
\end{equation}
%\end{align}
The restriction of $\tilde\alpha$ to $\alpha(N)$ is $(\Delta\otimes\id)= (\id\otimes\alpha)$, the restriction of $\widehat{\alpha}$ to $\alpha(N)$ is the trivial action, and the restriction of $\hh\alpha$ to $L^\infty(\hh\G)\vtp \C$ is equal to $\hh\Delta(\cdot)_{13}$.

If $\G$ is a discrete quantum group, then the map $E=(\id\otimes \varphi^{\hh\G})\circ\widehat{\alpha}$ defines a faithful normal conditional expectation from $\G\ltimes_{\alpha}N$ onto $\alpha(N)$. %, where as mentioned above, we identify $N$ with its canonical image in the crossed product. 
We refer to $E$ as the \emph{canonical conditional expectation}.

Given actions $\alpha$ and $\beta$ of a lcq group $\G$ on von Neumann algebras $M$ and $N$, respectively, we say a map $\Phi:N\to M$ is $\G$-\emph{equivariant} if $(\id\otimes\Phi)\,\alpha=\beta\,\Phi$, or equivalently, $\Phi\,(\mu\otimes\id)\,\alpha=(\mu\otimes\id)\,\beta\,\Phi$ for all $\mu\in\LO$.

The following fact is used in several proofs below, it generalizes part of \cite[Lemma 5.2]{KKSV}.

\begin{lemma}\label{lem:cond-eq}
Let $\G$ be a discrete quantum group, and let $N$ be a $\G$-von Neumann algebra. The canonical conditional expectation $E:\G\ltimes_{\alpha}N\to \alpha(N)$ is $\G$-equivariant if and only if $\G$ is of Kac type.
\end{lemma}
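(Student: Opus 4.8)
The plan is to eliminate the dependence on $N$ by reducing $\G$-equivariance of $E$ to an invariance property of the Haar state $\varphi^{\hh\G}$ alone, and then to identify that property with the Kac condition. For the reduction I would use that $E$ is a conditional expectation onto $\alpha(N)$, hence an $\alpha(N)$-bimodule map, that $\beta,\widehat\alpha,E,\alpha$ are all normal, and that (recall) the set $\{\alpha(x)(z\otimes 1_N):x\in N,\ z\in\LLL\}$ spans a weak-$*$ dense subspace of $\G\ltimes_\alpha N$, where $\LLL$ is identified with $\LLL\vtp\C$ inside the crossed product. Thus it suffices to check $(\id\otimes E)\circ\beta=\alpha\circ E$ on these elements. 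Using the identities recorded in the Preliminaries — $E=\id$ and $\beta=(\id\otimes\alpha)$ on $\alpha(N)$, and (directly from the formula for $\widehat\alpha$) that $\widehat\alpha$ restricts on $\LLL\vtp\C$ to a comultiplication of $\hh\G$, so that $E(z\otimes 1_N)=(\id\otimes\varphi^{\hh\G})\Delta^{\hh\G}(z)=\varphi^{\hh\G}(z)\,1$ by invariance of the Haar state of the compact quantum group $\hh\G$ — together with $\beta\bigl(\alpha(x)(z\otimes 1_N)\bigr)=(\id\otimes\alpha)(\alpha(x))\cdot(\gamma(z)\otimes 1_N)$, where $\gamma:=\beta|_{\LLL}\colon\LLL\to\LL\vtp\LLL$, $\gamma(z)=\ww^{\G*}(1\otimes z)\ww^\G$, is the (normal, unital) adjoint action of $\G$ on $\LLL$, a short computation with the bimodule property of $E$ would give
\[
(\id\otimes E)\circ\beta=\alpha\circ E
\qquad\Longleftrightarrow\qquad
(\id\otimes\varphi^{\hh\G})(\gamma(z))=\varphi^{\hh\G}(z)\,1\ \text{ for all }z\in\LLL .
\]
The right-hand condition is precisely the $\gamma$-invariance of $\varphi^{\hh\G}$, and it involves neither $N$ nor $\alpha$.

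It then remains to prove that $\varphi^{\hh\G}$ is invariant under the adjoint action $\gamma$ if and only if $\G$ is of Kac type. Here I would decompose $\LL=\ell^\infty(\G)=\bigoplus_{s\in\irr(\hh\G)}B(H_s)$; on the block $B(H_s)$ the action $\gamma$ is conjugation $z\mapsto U_s^*(1\otimes z)U_s$ by the irreducible unitary corepresentation $U_s$, so the condition becomes $(\id\otimes\varphi^{\hh\G})(U_s^*(1\otimes z)U_s)=\varphi^{\hh\G}(z)\,1_{H_s}$ for every $s$ and every $z$, and it suffices to verify this for $z$ a matrix coefficient of an irreducible $U_t$ (it then follows for all $z$ by normality). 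Decomposing $U_t\otimes U_s$ into irreducibles and applying the Woronowicz orthogonality relations, only the $U_s$-isotypic part of $U_t\otimes U_s$ contributes, and the conjugation forces a contraction of the corresponding intertwiner along the $H_s$-leg. When $\G$ is of Kac type, $\varphi^{\hh\G}$ is tracial and the orthogonality relations are ``flat'': this contraction is the genuine categorical trace, producing a $U_t$-invariant vector in $H_t$, which vanishes unless $t$ is trivial, while for $t$ trivial one is simply left with $(\id\otimes\varphi^{\hh\G})(U_s^*U_s)=1_{H_s}$. Hence $\varphi^{\hh\G}$ is $\gamma$-invariant and, by the reduction, $E$ is $\G$-equivariant.

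Conversely, if $\G$ is not of Kac type, then for some $s$ the positive matrix $F_s$ governing the orthogonality relations for $U_s$ — equivalently, the matrix defining the non-tracial state $\phi_s$ of the Preliminaries — is not scalar. Repeating the computation with these $F_s$-weights present, the contraction appearing above is no longer the categorical trace, and for a suitable matrix coefficient $z$ (for instance of $U_s\otimes\overline{U_s}$) one obtains $(\id\otimes\varphi^{\hh\G})(U_s^*(1\otimes z)U_s)\neq\varphi^{\hh\G}(z)\,1_{H_s}$; thus $E$ is not $\G$-equivariant. In this scheme the first step is essentially bookkeeping with the crossed-product generators and the module property of $E$; the main obstacle is the second and third steps, i.e. the explicit two-way computation identifying $\gamma$-invariance of $\varphi^{\hh\G}$ with the triviality of all the $F_s$ (equivalently of all the $\phi_s$), which is exactly the Kac condition — this is where the precise form of the orthogonality relations for the Haar state, and the way the matrices $F_s$ enter them, must be used.
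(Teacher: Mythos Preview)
Your reduction is exactly the paper's argument: use the multiplicative-domain/bimodule property of $E$ together with weak-$*$ density of $\alpha(N)(\LLL\otimes\C)$ to reduce to checking equivariance on $\LLL$, compute both sides there, and conclude that $E$ is $\G$-equivariant if and only if $\varphi^{\hh\G}$ is invariant under the adjoint action $\gamma(z)=\ww^{\G*}(1\otimes z)\ww^\G$. The paper then simply invokes \cite[Lemma~5.2]{KKSV} for the equivalence ``$\varphi^{\hh\G}$ is $\gamma$-invariant $\Leftrightarrow$ $\G$ is Kac'', whereas you propose to prove this directly via the block decomposition $\ell^\infty(\G)=\bigoplus_s B(H_s)$ and the orthogonality relations.

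Your forward direction (Kac $\Rightarrow$ invariance) is fine: traciality of $\varphi^{\hh\G}$ lets you cycle and use unitarity of $U_s$ and its contragredient. The converse, however, is only asserted: you say that if some $F_s$ is non-scalar then ``for a suitable matrix coefficient $z$ (for instance of $U_s\otimes\overline{U_s}$)'' the invariance fails, but you neither produce such a $z$ nor carry out the computation showing the failure. That is the one place where an actual argument is still owed. In practice the cleanest route is exactly what the paper does---cite the known lemma---but if you want a self-contained proof, you should explicitly compute $(\id\otimes\varphi^{\hh\G})\bigl(U_s^*(1\otimes u^{(t)}_{kl})U_s\bigr)$ using the Woronowicz orthogonality relations with the $F$-matrices present, and exhibit a specific choice of $s,t,k,l$ for which it is nonzero while $\varphi^{\hh\G}(u^{(t)}_{kl})=0$.
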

\begin{proof}
The restriction of $E$ to $\alpha(N)$ is the identity, hence $\G$-equivariant. Since $\alpha(N)$ is in the multiplicative domain of $E$, and $\G\ltimes_{\alpha}N$ is generated by $\alpha(N)$ and $L^\infty(\hh\G)\otimes \C$, we need to show that the restriction of $E$ to $L^\infty(\hh\G)\otimes \C$ is $\G$-equivariant if and only if $\G$ is of Kac type.
For this, let $y\in L^\infty(\hh\G)$ and compute
\begin{align*}
(\id\otimes E)\tilde{\alpha}(y\otimes 1)
&=
(\id\otimes E)\Big(\big((\ww^\G)^*(1\otimes y)\ww^\G\big)  \otimes 1\Big)
\\&=
(\id\otimes \varphi^{\hh\G})(\id\otimes\hh\alpha) \Big(\big((\ww^\G)^*(1\otimes y)\ww^\G\big) \otimes 1\Big)
\\&=
(\id\otimes \varphi^{\hh\G})\Big( (\id\otimes \hh\Delta)  \big((\ww^\G)^*(1\otimes y)\ww^\G\big) \Big)_{124}
\\&=
 \Big( (\id\otimes \varphi^{\hh\G})  \big((\ww^\G)^*(1\otimes y)\ww^\G \big) \Big)\otimes1\otimes1,
\end{align*}
where in the last equality we used the left invariance of $\varphi^{\hh\G}$.
On the other hand, 
\begin{align*}
{\alpha}\big(E(y\otimes 1)\big)
=
{\alpha}\big((\id\otimes\varphi^{\hh\G})\Delta^{\hh\G}(y)_{13}\big) 
= 
\varphi^{\hh\G}(y) ,
\end{align*}
which shows that $E$ is $\G$-equivariant if and only if $\varphi^{\hh\G}$ is $\G$-invariant. Invoking \cite[Lemma 5.2]{KKSV} yields the result.
\end{proof}
In several results, we use the notion of unitary implementation of an action in the sense of \cite{V}. We recall that given an action $\alpha$ of a lcq group $\G$ on a von Neumann algebra $N$, and an n.s.f. weight $\theta$ on $N$, the unitary implementation of the action is a unitary corepresentation $U_\theta\in B(L^2(\G)\otimes L^2(N, \theta))$ of $\G$ such that $\alpha( \cdot ) = U_\theta (1\otimes \cdot) U_\theta^*$.
Then the formula $\alpha'(\cdot):=U_\theta^*(1\otimes \cdot) U_\theta$ defines an action of $\G^{\text{op}}$ on the commutant von Neumann algebra $N'\subseteq B(L^2(N, \theta))$ \cite[Proposition 6.8]{ENOCK}. 

The following theorem is proved in \cite[Theorem 11.7 (ii)]{ENOCK} for actions of measured quantum groupoids. We include the proof in the case of actions of locally compact quantum groups for the convenience of the reader.
%More details about this action can be found in \cite[Proposition 2.3]{K-IJM2}. 

\begin{theorem}\label{commutant}
Let $\alpha : \G\curvearrowright N$ be an action of a lcq group $\G$ on a von Neumann algebra $N$. Let $\theta$ be an n.s.f. weight on $N$, and $U_\theta\in B(L^2(\G)\otimes L^2(N, \theta))$ the unitary implementation of $\alpha$. Then
\[
(\G\ltimes_\alpha N)'	= U_\theta(\G^{\text{op}}\ltimes_{\alpha'} N')U_\theta^*.
\]
\end{theorem}
\begin{proof}
By \cite[Theorem 2.6]{V} there is an action $\gamma$ of $\G$ on the von Neumann algebra $B(L^2(\G))\vtp N$ defined by
%\[
$\gamma:z\mapsto 
(\chi\otimes\id)\big(V^*_{12} ((\id\otimes\alpha)(z))V_{12}\big)$,
%\]
and moreover, we have
\[
\G\ltimes_\alpha N =\{z\in B(L^2(\G))\vtp N \st \gamma(z)=1\otimes z\} .
\]
Thus, using the fact that $U_\theta$ implements $\alpha$, we conclude that
\begin{equation}\label{Saturated}
\G\ltimes_\alpha N =\{z\in B(L^2(\G))\vtp N \st {(U^*_\theta)}_{23} V_{12}\, z_{13}= z_{13}\,{(U^*_\theta)}_{23} V_{12}\} .
\end{equation}
By \cite[Theorem 4.4]{V} the unitary $U_\theta$ is a corepresentation: %, that is
%\begin{equation*}%\label{corepn}
$V_{12}\,{(U_\theta)}_{13}\,V_{12}^*= {(U_\theta)}_{23}{(U_\theta)}_{13}$. %	\end{equation*}
Applying this to \eqref{Saturated}, we get
%\begin{equation}%\label{Saturated}
\[\begin{split}
&\G\ltimes_\alpha N 
=\left\{z\in B(L^2(\G))\vtp N \st V_{12} {(U^*_\theta z U_\theta)}_{13}={(U^*_\theta z U_\theta)}_{13} V_{12}\right\} .
\end{split}\]%{equation}
In particular, it follows that $\G\ltimes_\alpha N$ contains exactly those $z\in B(L^2(\G))\vtp N$ for which $(\id\otimes\rho)\left({U^*_\theta} z {U_\theta}\right)$ commutes with $(\id\otimes\om)V$ for all $\rho\in N_*$ and $\om\in B(L^2(\G))_*$. Since $L^\infty(\widehat\G)' = \{(\id\otimes\om)V \st \om\in B(L^2(\G))_*\}^{\prime\prime}$, 
we conclude
\[\begin{split}
(\G\ltimes_\alpha N)^\prime &= 
\Big((B(L^2(\G))\vtp N) \,\cap\, (U_\theta(\LLL\vtp B(H_\theta))U^*_\theta)\Big)^\prime
\\&=
\big\{(1\vtp N') \, \cup \, (U_\theta (\LLL' \vtp 1) U^*_\theta)\big\}^{\prime\prime}
\\&=
U_\theta\,\big\{(U^*_\theta(1\vtp N')U_\theta) \, \cup \, (\LLL' \vtp 1) \big\}^{\prime\prime}\,U^*_\theta
\\&=
U_\theta\,\{\alpha^\prime(N^\prime)\, \cup \, (L^\infty(\widehat\G)^\prime \vtp 1)\}^{\prime\prime}\,U^*_\theta
%\\&
= U_\theta\,(\G^{\text{op}}\ltimes_{\alpha'} N')\,U^*_\theta ,
\end{split}\]
where in the last equality we use the fact that $L^\infty(\widehat{\G^{\text{op}}}) = L^\infty(\hh\G)^\prime$.
\end{proof}

%%%%%%%
%%%%%%%

%%%%%%%
%%%%%%%

\section{Relative Yetter--Drinfeld algebras}\label{sec3}
In this section we introduce the notion of relative Yetter--Drinfeld algebras, which provides a unified framework for our main results.

%%%%%%%%%%%%%
%\subsection*{Generalized Yetter--Drinfeld algebras}
Let $\G$ be a lcq group, and let $\hh{\mathbb{H}}$ be a closed quantum subgroup of $\hh\G$. This means there is an injective normal $*$-homomorphism $\gamma: L^\infty(\mathbb{H})\to L^\infty(\G)$ satisfying $(\gamma \otimes \gamma) \circ \Delta^{\H} = \Delta^{\G} \circ \gamma$. 
The associated \emph{bicharacter} is the unitary $\overline{\ww}^{\mathbb{H}}=(\gamma\otimes\id)\ww^{\mathbb{H}}\in L^\infty(\G)\vtp L^\infty(\hh{\mathbb{H}})$ (see \cite[Definition~3.1]{MRW12}), and the map $\rho: L^\infty(\hh\G)\to L^\infty(\hh\h)\vtp L^\infty(\hh\G)$ defined by $\rho(\hh x):=(\overline{\ww}^{\hh\h})^*\left( 1\otimes \hh x\right) \overline{\ww}^{\hh\h}$, where $\overline{\ww}^{\hh\h}:= (\sigma\overline{\ww}^{\h})^*$, is the canonical left action of $\hh\h$ on $L^\infty(\hh\G)$. 

By \cite[Proposition 1.8]{DKSS}, the unitary $\overline{\ww}^{\hh\h}$ is also a bicharacter (from $\hh\H$ to $\hh\G$), i.e. satisfying
\begin{equation}\label{bi-ch}
(\id\otimes \Delta^\G)\overline{\ww}^{\hh\h}=\overline{\ww}^{\hh\h}_{13} \overline{\ww}^{\hh\h}_{12} .
\end{equation}
Note that the convention used in \cite{DKSS} is different from us, the authors there consider right multiplicative unitaries, and the dual quantum group in their setup is the commutant of the dual for us.

There is a non-degenerate $*$-homomorphism $\pi_{\hh\h}: C_0^u(\hh\G)\to M(C_0^u(\hh\H))$ 
intertwining the respective coproducts and such that 
\begin{equation}\label{eqq}
\rho(\Lambda_{\hh\G}(\hh a)) = ((\Lambda_{\hh\h}\circ\pi_{\hh\h})\otimes \Lambda_{\hh\G})\Delta^u_{\hh\G}(\hh a) ,
\end{equation}
for every $\hh a \in C_0^u(\hh\G)$ \cite[Section 1.3]{DKSS}.
\begin{definition}\label{defYD}
Let $\G$ be a locally compact quantum group and let $\hh{\mathbb{H}}$ be a closed quantum subgroup of $\hh\G$.
 A $(\G,\mathbb{H})$-Yetter-Drinfeld $C^*$-algebra is a $C^*$-algebra $A$ equipped with a continuous left action $\alpha$ of $\G$ and a continuous left action $\lambda$ of $\hh{\mathbb{H}}$ such that 
\begin{equation}\label{YD}
\overline{\ww}^{\mathbb{H}}_{12}\big((\id\otimes\lambda) \alpha(a)\big)(\overline{\ww}^{\mathbb{H}}_{12})^* = (\sigma\otimes \id)(\id\otimes\alpha) \lambda(a) .
\end{equation}
for all $a\in A$. Similarly, a von Neumann algebra $M$ equipped with left actions $\alpha$ of $\G$ and $\lambda$ of $\hh{\mathbb{H}}$ satisfying \eqref{YD} is called a $(\G,\mathbb{H})$-Yetter-Drinfeld von Neumann algebra.
\end{definition}

\begin{remark}{\normalfont
(i)\ When $\hh{\mathbb{H}}$ is the trivial quantum subgroup, then $\overline{\ww}^{\mathbb{H}} = 1$ and a $(\G,\mathbb{H})$-Yetter Drinfeld $C^*$-algebra is just a $\G$-$C^*$-algebra.
\\[0.2ex]
(ii)\ When $\hh{\mathbb{H}}=\hh\G$, then $\overline{\ww}^{\mathbb{H}}=\ww^\G$ and $(\G,\mathbb{H})$-Yetter--Drinfeld $C^*$-algebras are exactly $\G$-Yetter Drinfeld $C^*$-algebras.
}
\end{remark}

\begin{remark}{\normalfont
Given a lcq group $\G$ and a closed quantum subgroup $\hh{\mathbb{H}}$ of $\hh\G$, just similar to the case of usual Yetter--Drinfeld algebras, one can define the relative Drinfeld double $D(\G,{\H})$ to be the lcq group defined by $C_0(D(\G,{\H}))= C_0(\G)\otimes C_0(\hh{\mathbb{H}})$ with the comultiplication
\[
\Delta^{D(\G,{\H})}=(\id\otimes \sigma \otimes \id)\big(\overline{\ww}^{{\H}}_{23}(\Delta^\G\otimes \Delta^{\hh{\mathbb{H}}})(\overline{\ww}^{{\H}}_{23})^*\big).
\]
Then, slightly modifying the proof of \cite[Proposition 3.2]{NV}, it follows that a $(\G,{\H})$-Yetter--Drinfeld $C^*$-algebra is the same thing as a $D(\G,{\H})$-$C^*$-algebra.
Since we will not need any of these facts, we will not go into further details of these facts.}
\end{remark}

\begin{lemma}\label{lem:GHcrossed}
Let $\G$ be a lcq group and let $\hh\h$ be a closed quantum subgroup of $\hh\G$, and let $M$ be a $(\G,\H)$-Yetter-Drinfeld von Neumann algebra equipped with left actions $\alpha$ of $\G$ and $\lambda$ of $\hh\h$.
Then the formula
\[
\tilde{\lambda}(z)= (\sigma\otimes\id)\big(\overline{\ww}^{\h}_{12} \left( \left(\id\otimes \lambda\right) z \right) (\overline{\ww}^{\h}_{12})^*\big)
\]
defines a left action of $\hh\h$ on $\G\ltimes_\alpha M$, and equipped also with the left action $\tilde{\alpha}$ of $\G$ defined in \eqref{beta}, $\G\ltimes_\alpha M$ turns into a $(\G,{\H})$-Yetter-Drinfeld von Neumann algebra.
\begin{proof}
For every $\hh x\in L^\infty(\hh\G)$, %using \eqref{YD} 
we have
\begin{equation}\label{eq*}
\tilde{\lambda}(\hh x\otimes 1) = (\sigma\otimes\id)\left(\big(\overline{\ww}^{\h} \left( \hh x\otimes 1 \right) (\overline{\ww}^{\h})^*\big)\otimes 1\right)
=
\rho(\hh x)\otimes 1 ,
\end{equation}
where $\rho$ 
is the left action of $\hh\h$ on $L^\infty(\hh\G)$ recalled above.
It follows
\[\begin{split}
(\Delta^{\hh\h}\otimes\id)\tilde{\lambda}(\hh x\otimes 1) 
&=
\big((\Delta^{\hh\h}\otimes\id)(\rho(\hh x)\big)\otimes 1 
=
\big((\id\otimes\rho)\rho(\hh x)\big)\otimes 1
\\&=
(\id\otimes\tilde{\lambda})\big(\rho(\hh x)\otimes 1\big)
=
(\id\otimes\tilde{\lambda})\tilde{\lambda}(\hh x\otimes 1)
,
\end{split}\]
and so $\tilde{\lambda}$ defines a left action of $\hh\h$ when restricted on $L^\infty(\hh\G)\otimes \C$.
From \eqref{YD} we have
\begin{equation}\label{eq:alf-lamb}
(\id\otimes\alpha) \lambda(a) =\tilde{\lambda}\,(\alpha(a))
\end{equation}
for every $a\in M$, and therefore
\begin{align*}
(\Delta^{\hh\h}\otimes\id)\tilde{\lambda}\,(\alpha(a))
&= (\Delta^{\hh\h}\otimes\id)\, (\id\otimes\alpha) \lambda(a)
=(\id\otimes\id\otimes\alpha)\,(\Delta^{\hh\h}\otimes\id)\,\lambda(a)\\
&=(\id\otimes\id\otimes\alpha)\,(\id\otimes\lambda)\,\lambda(a)
=(\id\otimes \tilde{\lambda}\alpha)\,\lambda(a)
=(\id\otimes \tilde{\lambda})\,\tilde{\lambda}\,(\alpha(a)) \,,
\end{align*}
where \eqref{eq:alf-lamb} is used in the first, forth and fifth equalities, and the fact that $\lambda$ is a left action of $\hh\h$ is used in the third equality.
This implies that the restriction of $\tilde{\lambda}$ to $\alpha(M)$ yields a left action of $\hh\h$ on $\alpha(M)$.
Since the von Neumann $\G\ltimes_\alpha M$ is generated by $L^\infty(\hh\G)\otimes \C$ and $\alpha(M)$, it follows that $\tilde\lambda$ is a left action of $\hh\h$ on $\G\ltimes_\alpha M$.

We next check \eqref{YD} for $\tilde{\lambda}$. For every $a\in M$, 
\begin{align*}
(\sigma\otimes\id)\,(\id\otimes \tilde{\alpha})\, \tilde{\lambda} (\alpha(a))
&= (\sigma\otimes\id)\,(\id\otimes \tilde{\alpha})\, (\id\otimes\alpha)\,\lambda(a)&&\text{by \eqref{eq:alf-lamb}}\\
&=(\sigma\otimes\id)\,(\id\otimes \id \otimes \alpha)\, (\id\otimes\alpha)\,\lambda(a)&&\text{}\\
&=(\id\otimes \id \otimes \alpha)\,(\sigma \otimes \id)\, (\id\otimes\alpha)\,\lambda(a)&&\text{}\\
&=
(\id\otimes \id \otimes \alpha)\Big(\overline{\ww}^{\mathbb{H}}_{12}\big((\id\otimes\lambda) \alpha(a)\big)(\overline{\ww}^{\mathbb{H}}_{12})^*\Big)&&\text{by \eqref{YD}}\\
&=
\overline{\ww}^{\mathbb{H}}_{12}\,\Big((\id\otimes \id \otimes \alpha)(\id\otimes\lambda) \alpha(a)\Big)\,(\overline{\ww}^{\mathbb{H}}_{12})^*&&\text{}\\
&=
\overline{\ww}^{\mathbb{H}}_{12}\,\Big((\id\otimes\tilde\lambda)(\id\otimes \alpha)\, \alpha(a)\Big)\,(\overline{\ww}^{\mathbb{H}}_{12})^*&&\text{by \eqref{eq:alf-lamb}}\\
&=
\overline{\ww}^{\mathbb{H}}_{12}\,\Big((\id\otimes \tilde{\lambda}) \, \tilde{\alpha} \,(\alpha(a))\Big)\,(\overline{\ww}^{\mathbb{H}}_{12})^* .&&\text{}
\end{align*}
And, for $\hh x\in L^\infty(\hh\G)$ we have
\begin{align*}
(\sigma\otimes \id)\,(\id\otimes \tilde{\alpha})\, \tilde{\lambda}(\hh x\otimes 1)
&=
\big[(\sigma\otimes \id)\,\big((\ww^{\G}_{23})^* (\overline{\ww}^{\hh\h}_{13})^* (1\otimes 1 \otimes \hh x )\overline{\ww}^{\hh\h}_{13}\ww^{\G}_{23}\big)\big] \otimes 1
\\&= 
\big[(\sigma\otimes \id)\,\big((\overline{\ww}^{\hh\h}_{12})^* (\overline{\ww}^{\hh\h}_{13})^*(\ww^\G_{23})^*(1\otimes 1 \otimes \hh x )\ww^\G_{23} \overline{\ww}^{\hh\h}_{13}\overline{\ww}^{\hh\h}_{12}\big)\big] \otimes 1
 \\&= 
 \big[\overline{\ww}^{\H}_{12}(\overline{\ww}^{\hh\H}_{23})^*
(\ww^\G_{13})^*(1\otimes 1 \otimes \hh x )\ww^\G_{13} \overline{\ww}^{\hh\H}_{23} (\overline{\ww}^{\H}_{12})^*\big]
 \otimes 1
 \\&= \overline{\ww}^{\H}_{12}\Big([(\id\ot\rho)\big((\ww^\G)^*(1 \otimes \hh x )\ww^\G\big)]\otimes 1\Big) (\overline{\ww}^{\H}_{12})^*
 \\&= 
 \overline{\ww}^{\H}_{12}\Big((\id \otimes \tilde{\lambda}) [(\ww^\G)^* (1\otimes \hh x) \ww^\G \otimes 1]\Big) (\overline{\ww}^{\H}_{12})^* 
\\&= 
\overline{\ww}^{\H}_{12}\big((\id\otimes \tilde{\lambda}) \, \tilde{\alpha} \,(\hh x\otimes 1)\big) (\overline{\ww}^{\H}_{12})^* ,
\end{align*}
where we used the definitions of $\tilde\alpha$ and $\tilde\lambda$ in the first equality, we used \eqref{bi-ch} in the second, definition of $\rho$ in the forth equality, and \eqref{eq*} in the fifth equality, and once again the definition of $\tilde\alpha$ in the last one.
This completes the proof.% 
\end{proof}
\end{lemma}

%%%%%%%

\begin{remark}\label{rem3}
\normalfont{In the setup of Lemma~\ref{lem:GHcrossed}, recall that the restriction of $\tilde\alpha$ to $\alpha(M)$ is $(\id\otimes\alpha)$, that is $\tilde\alpha\circ\alpha = (\id\otimes\alpha)\circ\alpha$. This, together with \eqref{eq:alf-lamb} entails that the map $\alpha: M\to \G\ltimes_\alpha M$ is both $\G$-equivariant and $\hh\h$-equivariant. }
\end{remark}
%%%%%%%%%%%

%%%%%
\begin{lemma}\label{lem:act-ext}
Let $\G$ be a discrete quantum group of Kac type and let $\hh\h$ be a closed quantum subgroup of $\hh\G$, and let $M$ be a $(\G,{\H})$-Yetter-Drinfeld von Neumann algebra equipped with left actions $\alpha$ of $\G$ and $\lambda$ of $\hh\h$.	
Then the canonical conditional expectation $E:\G\ltimes_\alpha M \to \alpha(M)$ is both $\G$-equivariant and $\hh\h$-equivariant. 
\end{lemma}
\begin{proof}
By Lemma~\ref{lem:cond-eq} $E$ is $\G$-equivariant. We show it is $\hh\h$-equivariant, and for this, we only need to prove that the restriction of $E$ to $L^\infty(\hh\G)\ot\C$ is $\hh\h$-equivariant. 

Let $\hh x\in C(\hh\G)$, and let ${\hh y}\in C^u(\hh\G)$ be such that $\Lambda_{\hh\G}(\hh y)=\hh x$. On the one hand we have 
\[
\tilde{\lambda}\left( E(\hh x\otimes 1) \right) = (\id_{\G\ltimes_\alpha M}\otimes \varphi^{\hh\G})(\widehat{\alpha}(\hh x\otimes 1))= 
(\id\otimes\id\otimes \varphi^{\hh\G})\Delta^{\hh\G}(\hh x)_{13}
=\varphi^{\hh\G}(\hh x) 1\otimes 1\otimes 1,
\]
and on the other hand, %using~\eqref{eqq} we have 
\begin{align*}
(\id\otimes E) (\tilde{\lambda} (\hh x\otimes 1))
&= (\id\otimes E) (\rho(\hh x)\otimes 1)
%\\ &
=\big((\id\otimes \varphi^{\hh\G})\rho(\hh x)\big) \otimes 1\\
 &=\big((\id\otimes \varphi^{\hh\G}) ((\Lambda_{\hh\h}\circ \pi)\otimes \Lambda_{\hh\G})\Delta^u_{\hh\G}(\hh y)\big)\otimes 1
 \\  &
 =\big(((\Lambda_{\hh\h}\circ \pi)\otimes \varphi^{\hh\G}\circ \Lambda_{\hh\G})\Delta^u_{\hh\G}(\hh y)\big)\otimes 1
 \\ &
 =\varphi^{\hh\G}( \Lambda_{\hh\G}(\hh y))1\otimes 1\otimes 1
 %\\&
 =\varphi^{\hh\G}(\hh x) 1\otimes 1\otimes 1 ,
\end{align*}
where we used~\eqref{eq*} in the first equation, \eqref{eqq} in the third, and the fact that $\varphi^{\hh\G}\circ \Lambda_{\hh\G}$ is the Haar state on $C^u(\hh\G)$ (\cite[Propositions 8.2 and 8.4]{Kust01}) in the fifth equation.

Since $C(\hh\G)$ is weak$^*$-dense in $L^\infty(\hh\G)$	and $E$ is normal, it follows that $E$ is $\hh\h$-equivariant on $L^\infty(\hh\G)\ot\C$.
\end{proof}

\section{Strongly Approximately Transitive (SAT) State}\label{sec:SAT}

In this section we define SAT states in the setting of lcq group actions, and prove some of their main general properties. This is the natural generalization of the commutative notion, which was introduced by Jaworski in \cite{Jaw}.

\begin{definition}
Let $\alpha$ be an action of a lcq group $\G$ on a von Neumann algebra $N$. A normal state $\nu$ on $N$ is called \emph{strongly approximately transitive (SAT)}, if the norm closure of the set $\{(\omega\otimes \nu) \alpha\st\omega ~{\text{is a normal state on}} ~\LL \}$ contains all normal states on $N$.
\end{definition}
For $\alpha$ and $\nu$ as in the above definition, the \emph{Poisson map} $P_\nu:N\to\LL$ is defined by $P_\nu(x)=(\id\otimes\nu)\alpha(x)$.

The following is a useful characterization of SAT states. The proof  is similar to \cite[Lemma 4.2]{KKSV} (also, cf. \cite[Proposition~2.2]{Jaw}). We include the proof for the convenience of the reader. 
\begin{proposition}\label{isometry}
Let $\alpha$ be an action of a lcq group $\G$ on a von Neumann algebra $N$. A normal state $\nu$ on $N$ is SAT if and only if the Poisson map $P_\nu$ is isometric on the self-adjoint part of $N$.
\end{proposition}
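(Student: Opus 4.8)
The plan is to prove the two directions separately, translating the SAT property (a statement about norm-density in the predual) into an isometry property of the dual map $T := (\id \otimes \nu)\alpha : N \to \LL$, which is a normal unital completely positive map. First observe that $T$ is always contractive, so the content is lower bounds on $\|T(x)\|$ for self-adjoint $x$. The key dictionary is that $T$ is the adjoint of the map $T_* : \LO \to N_*$ given by $\omega \mapsto (\omega \otimes \nu)\alpha$, so the closure of $\{(\omega\otimes\nu)\alpha : \omega \in \LO\}$ is the norm-closure of the range of $T_*$ restricted to states, and SAT says this closure contains all normal states of $N$.

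For the direction ``SAT $\Rightarrow$ isometric on $N_{sa}$'': fix a self-adjoint $x \in N$ with $\|x\|=1$; I want to show $\|T(x)\| = 1$, i.e. there are functionals $\omega \in \LO$, $\|\omega\|\le 1$, with $|\omega(T(x))| = |(\omega\otimes\nu)\alpha(x)|$ arbitrarily close to $1$. Since $x$ is self-adjoint of norm one, there is a normal state $\rho$ on $N$ with $|\rho(x)|$ arbitrarily close to $1$ (take $\rho$ supported near the top or bottom of the spectrum of $x$). By the SAT hypothesis, $\rho$ is a norm-limit of states of the form $(\omega_n \otimes \nu)\alpha = T_*(\omega_n)$ with $\omega_n$ states on $\LL$; then $|\omega_n(T(x))| = |T_*(\omega_n)(x)| \to |\rho(x)|$, giving $\|T(x)\| \ge |\rho(x)|$, and letting $\rho(x) \to 1$ yields $\|T(x)\| = 1$. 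Hence $T$ is isometric on $N_{sa}$.

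For the converse ``isometric on $N_{sa}$ $\Rightarrow$ SAT'': this is the direction I expect to carry the real weight, and it should follow the pattern of \cite[Lemma 4.2]{KKSV} / \cite[Proposition 2.2]{Jaw}. Let $K$ denote the norm-closed convex subset of $N_*$ consisting of the closure of $\{T_*(\omega) : \omega \text{ a normal state on } \LL\}$ — this is a closed convex set of normal states, and I must show it equals the whole normal state space. If not, by Hahn–Banach separation in the predual there is a self-adjoint $x \in N = (N_*)^*$ and a real number $t$ with $\mathrm{Re}\,\phi(x) \le t$ for all $\phi \in K$ but $\mathrm{Re}\,\rho(x) > t$ for some normal state $\rho \notin K$; since everything in sight is self-adjoint we may take $x = x^*$ and drop the real parts. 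Then $\sup_{\omega}\, (\omega\otimes\nu)\alpha(x) = \sup_{\omega} \omega(T(x)) = \max(\operatorname{sp}(T(x)))$ where the sup is over states $\omega$ on $\LL$, so the separation says $\max \operatorname{sp}(T(x)) = \max_{\phi\in K}\phi(x) \le t < \rho(x) \le \max\operatorname{sp}(x)$. But the isometry hypothesis on the self-adjoint part, together with unitality and positivity of $T$ (so $\operatorname{sp}(T(x)) \subseteq [\min\operatorname{sp}(x),\max\operatorname{sp}(x)]$ and $\|T(x)\| = \|x\|$), forces $\operatorname{sp}(T(x))$ to reach $\|x\|$ or $-\|x\|$; applying the same argument to $\pm x$ and to $x$ shifted by scalars (replace $x$ by $x - s1$, which $T$ also handles since $T$ is unital) pins both endpoints of $\operatorname{sp}(T(x))$ down to those of $\operatorname{sp}(x)$, contradicting the strict inequality. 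The main obstacle is exactly this last step: upgrading ``$\|T(x)\| = \|x\|$ for all self-adjoint $x$'' to ``$\operatorname{sp}(T(x)) = [\min\operatorname{sp}(x), \max\operatorname{sp}(x)]$ endpoints match'' via the unital-positive shift trick, and then checking the separation functional argument is watertight in the non-unital/predual setting; this is routine but must be done carefully, and it is the reason the authors defer to the cited lemmas.
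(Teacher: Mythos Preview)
Your proof is correct: the forward direction via approximating spectral-endpoint normal states and the converse via Hahn--Banach separation together with the unital shift $x\mapsto x-(\min\sigma(x))1$ are exactly the standard argument, and this is precisely what the paper has in mind when it omits the proof and defers to \cite[Lemma~4.2]{KKSV} and \cite[Proposition~2.2]{Jaw}. The only point worth flagging is that you (correctly) read the SAT definition with $\omega$ ranging over normal \emph{states} rather than all of $\LO$; this is the intended reading, consistent with Jaworski's classical definition, and indeed the only one under which the proposition holds as stated.
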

\begin{proof}
Let $\nu$ be a SAT state. For a given self-adjoint element $x\in N$ and $\varepsilon>0$, there is a normal state $\rho$ on $N$ such that $|\rho(x)|>\|x\|-\varepsilon$. 
Since $\nu$ is a SAT state, there exists a normal state $\omega$ in $\LO$ such that 
%	\[
$\|x\|-\varepsilon< |(\omega\otimes\nu)\alpha(x)|=|\,\omega(P_\nu(x))|$.
%	\]
Since $\varepsilon$ was arbitrary and $P_\nu$ is a contraction, we have $\|P_\nu(x)\|=\|x\|$.
	
Conversely, assume $\nu$ is not SAT, and let $\rho$ be a normal state on $N$ that does not belong to the norm closure of $\{(\omega\otimes \nu) \alpha\st\omega ~{\text{is a normal state on}} ~\LL \}$.

Then by the Geometric Hahn-Banach theorem, there exists $x\in N$, $r\in \mathbb R$ and $\varepsilon > 0$ such that ${\rm Re}(\omega(P_\nu(x)))= {\rm Re}((\omega\otimes\nu)\alpha(x))\leq r < r+\varepsilon\le {\rm Re}(\rho(x))\leq \|x\|$ for every state $\omega\in \LO$. Note that the above inequalities hold the same if we replace $x$ by $x^*$, and therefore we may assume $x$ is self-adjoint.
Then, $P_\nu(x)$ is a self-adjoint element in $N$, and therefore $\|P_\nu(x)\|=\sup\left\{|\omega\left(P_\nu(x)\right)|:\omega~{\text{is a normal state on}} ~\LL\right\}\le \rho(x) - \varepsilon \le \|x\|- \varepsilon <\|x\|$, which contradicts $P_\nu$ being isometric.
\end{proof}
In particular, Proposition~\ref{isometry} provides us with a source of natural examples of SAT states, namely, 
\emph{noncommutative Poisson boundaries} in the sense of Izumi \cite{I}, defined as follows (see \cite{KNR1} for the general locally compact case).

Given a discrete quantum group $\G$ and a normal state $\mu\in \ell^1(\G)$, we denote $H^\infty(\G,\mu)=\{x\in \ell^\infty(\G) \st (\id\otimes\mu)\Delta^\G(x)=x\}$ for the space of all \emph{$\mu$-harmonic} elements in $\ell^\infty(\G)$. The space $H^\infty(\G,\mu)$ is a weak* closed operator subsystem of $\ell^\infty(\G)$ which admits a canonical multiplication, turning it into a von Neumann algebra, called the noncommutative Poisson boundary of the pair $(\G, \mu)$. The co-multiplication $\Delta^\G$ restricts to an action of $\G$ on $H^\infty(\G,\mu)$, turning it into a $\G$-von Neumann algebra \cite[Proposition 2.1]{KNR2}. %We refer the reader to \cite{I} and \cite{KNR1} for more details.

In this case, the restriction of the unit $\varepsilon^\G$ of $\DO$ to $H^\infty(\G,\mu)$ is a SAT state. Indeed, the map $(\id\otimes\varepsilon^\G)\Delta^\G \colon H^\infty(\G,\mu)\to \DD$ is just the inclusion, hence isometric. We also observe that the restrictions of $\varepsilon^\G$ and $\mu$ to ${H^\infty(\G,\mu)}$ coincide.

We will use in several places the fact that the crossed product $\G\ltimes_{\Delta^\G} H^\infty(\G,\mu)$ is an injective von Neumann algebra \cite[Corollary 2.5]{KNR2}.
\\[1ex]
Recall that an action $\alpha$ of a lcq group $\G$ on a von Neumann algebra $N$ is said to be \emph{ergodic} if $N^\alpha=\mathbb{C}1_N$, where $N^\alpha:=\{x\in N\st \alpha(x)=1\otimes x\}$ is the fixed point algebra of the action. 
\begin{proposition}
Let $\alpha$ be an action of a lcq group $\G$ on a von Neumann algebra $N$ and suppose that $N$ admits a SAT state. Then $\alpha$ is ergodic.
\end{proposition}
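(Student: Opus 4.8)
The plan is to combine the isometry characterization of Proposition~\ref{isometry} with the trivial observation that the map $(\id\otimes\nu)\alpha$ collapses the fixed-point algebra onto the scalars. Recall first that $N^\alpha$ is a von Neumann subalgebra of $N$ containing $1_N$, so it is the norm-closed linear span of its self-adjoint elements; hence it suffices to show that every self-adjoint $x\in N^\alpha$ is a scalar multiple of $1_N$.

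So fix a self-adjoint $x\in N^\alpha$. Since $\alpha(x)=1\otimes x$, we get $(\id\otimes\nu)\alpha(x)=\nu(x)\,1_{\LL}$, and therefore $\|(\id\otimes\nu)\alpha(x)\|=|\nu(x)|$. As $\nu$ is SAT, Proposition~\ref{isometry} tells us that $(\id\otimes\nu)\alpha$ is isometric on the self-adjoint part of $N$, so $\|x\|=|\nu(x)|$. Applying this to $x_0:=x-\nu(x)1_N$, which is again self-adjoint and lies in $N^\alpha$ (because $1_N\in N^\alpha$) and satisfies $\nu(x_0)=0$ since $\nu$ is a state, we obtain $\|x_0\|=|\nu(x_0)|=0$, hence $x=\nu(x)1_N$. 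Thus $N^\alpha=\C1_N$, i.e. $\alpha$ is ergodic.

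There is no serious obstacle here — the entire content is already packaged in Proposition~\ref{isometry}. The only point that requires (minor) care is whether to route the argument through that proposition or to argue directly from the definition of SAT; in the latter case one notes that for every $\omega\in\LO$ the functional $(\omega\otimes\nu)\alpha$ annihilates the self-adjoint element $x-\nu(x)1_N$, so by norm density every normal state of $N$ annihilates it, which forces $x-\nu(x)1_N=0$.
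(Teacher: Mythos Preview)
Your proof is correct and follows essentially the same route as the paper's: both use Proposition~\ref{isometry} together with the observation that $(\id\otimes\nu)\alpha$ sends $N^\alpha$ into the scalars, and conclude that the self-adjoint part of $N^\alpha$ must be $\R 1_N$. Your version simply spells out the subtraction step $x\mapsto x-\nu(x)1_N$ that the paper leaves implicit.
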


\begin{proof}
Let $\nu$ be a SAT state. For every $x\in N^\alpha$ we have $(\id\otimes\nu)\alpha(x) = \nu(x) 1$. Since $N^\alpha$ is a von Neumann subalgebra of $N$, it is the span of its self-adjoint part, thus, it follows from Proposition \ref{isometry} that $N^\alpha=\mathbb{C}1$.
\end{proof}

It is obvious that if a $\G$-von Neumann algebra $N$ admits a SAT state which is also $\G$-invariant, then $N=\mathbb{C}$ is trivial. Indeed, even the co-existence of a SAT and an invariant state on $N$ impose a strong structural restriction on $N$.
In the case of locally compact group actions, it was shown in \cite[Proposition 2.6]{Jaw} that any SAT measure on a measurable $G$-space that is absolutely continuous with respect to a $\sigma$-finite invariant measure, is purely atomic. In particular, it follows for instance that if $G\act (X, \mu)$ is an ergodic probability measure preserving action, and there is a SAT measure $\nu$ in $L^1(X, \mu)$, then $X$ must be a finite space
(hence, the action of $G$ on $X$ is equivalent to the action of $G$ on the coset space of a finite-index subgroup).

We conclude this section with a generalization of the above-mentioned result of \cite{Jaw} to the quantum case. 
Recall that a von Neumann algebra $N$ is called {\it purely atomic} if every projection in $N$ has a minimal subprojection.

\begin{theorem}
Let $\alpha$ be an action of a lcq group $\G$ on a von Neumann algebra $N$. Suppose that $N$ admits an $\alpha$-invariant 
n.s.f. tracial weight and a faithful SAT state. Then $N$ is purely atomic.
\end{theorem}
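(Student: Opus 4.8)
\emph{Proof idea.}
Fix an $\alpha$-invariant n.s.f. tracial weight $\tau$ and a normal faithful SAT state $\nu$ on $N$; we must show $N$ is purely atomic. For a normal state $\om$ on $\G$ put $T_\om=(\om\ot\id)\circ\alpha\colon N\to N$, a normal unital completely positive map. Slicing $\tau$ through the second leg of $\alpha$, the $\alpha$-invariance of $\tau$ gives $\tau\circ T_\om=\tau$ on $N_+$; and by construction $(\om\ot\nu)\circ\alpha=\nu\circ T_\om$. The plan is to attach to each normal state a scalar quantity that is norm-continuous, is non-increasing under $\rho\mapsto\rho\circ T_\om$, and is small exactly when the state puts little mass on ``$\tau$-small'' elements, so that the SAT property forces this smallness onto \emph{every} normal state, which is incompatible with non-atomicity of $N$. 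Concretely, for $\epsilon>0$ and a normal state $\rho$ on $N$ set
\[
\Psi_\epsilon(\rho)=\sup\big\{\rho(a)\st a\in N,\ 0\le a\le 1,\ \tau(a)\le\epsilon\big\}\in[0,1].
\]
Since each competing $a$ is a contraction, $|\Psi_\epsilon(\rho_1)-\Psi_\epsilon(\rho_2)|\le\|\rho_1-\rho_2\|$, so $\Psi_\epsilon$ is norm-continuous on normal states; and $\Psi_\epsilon(\nu\circ T_\om)\le\Psi_\epsilon(\nu)$ for every normal state $\om$ on $\G$, because if $0\le a\le 1$ and $\tau(a)\le\epsilon$ then $b:=T_\om(a)$ still satisfies $0\le b\le 1$ and $\tau(b)=\tau(a)\le\epsilon$, so $(\nu\circ T_\om)(a)=\nu(b)\le\Psi_\epsilon(\nu)$. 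Reading the SAT property, as in the introduction, as the statement that every normal state on $N$ lies in the norm closure of $\{(\om\ot\nu)\alpha\st \om$ a normal state on $\G\}$, we conclude
\[
\Psi_\epsilon(\rho)\le\Psi_\epsilon(\nu)\qquad\text{for every normal state }\rho\text{ on }N\text{ and every }\epsilon>0.
\]

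Next I would show $\Psi_\epsilon(\nu)\to 0$ as $\epsilon\to 0^+$. Since $\tau$ is n.s.f. we may write $\nu=\tau(h\,\cdot\,)$ with $0\le h\in L^1(N,\tau)$ and $\tau(h)=1$; then for $R>0$ and $a\in N$ with $0\le a\le 1$, $\tau(a)\le\epsilon$,
\[
\nu(a)=\tau(ha)\le R\,\tau(a)+\tau\big(h\,\mathbf{1}_{[R,\infty)}(h)\big)\le R\,\epsilon+\tau\big(h\,\mathbf{1}_{[R,\infty)}(h)\big),
\]
and choosing first $R$ large so the last term is small (possible as $h\in L^1$) and then $\epsilon$ small gives the claim. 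Combining with the previous display, for \emph{every} normal state $\rho$ on $N$ one has $\sup\{\rho(q)\st q$ a projection in $N,\ \tau(q)\le\epsilon\}\to 0$ as $\epsilon\to 0$, and the rate is uniform in $\rho$.

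Finally, suppose for contradiction that $N$ is not purely atomic, so some nonzero projection $e\in N$ has no minimal subprojection. By semifiniteness of $\tau$ choose a nonzero projection $p\le e$ with $\tau(p)<\infty$, and split repeatedly: a non-minimal projection decomposes into two nonzero subprojections, one of trace at most half. This produces nonzero projections of arbitrarily small positive $\tau$-value. Pick such a $q_0$ with $\tau(q_0)=\epsilon_0$ small enough that $\Psi_{\epsilon_0}(\nu)<1$, and let $\rho_0(x)=\tau(q_0xq_0)/\tau(q_0)$, a normal state on $N$ with $\rho_0(q_0)=1$. Then $\Psi_{\epsilon_0}(\rho_0)\ge\rho_0(q_0)=1$ contradicts $\Psi_{\epsilon_0}(\rho_0)\le\Psi_{\epsilon_0}(\nu)<1$. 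Hence $N$ is purely atomic.

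The continuity and monotonicity of $\Psi_\epsilon$ and the closing bisection argument are routine. I expect the two points needing care to be: (i) the passage through the SAT property, where one wants the approximating functionals to be genuine states $(\om\ot\nu)\alpha$ with $\om$ a normal state on $\G$ (this is the intended ``quantum probability'' reading; from the literal definition with unrestricted $\om\in\LO$ a short Baire-category argument on the sets $\{\rho:\rho\in\overline{\{(\om\ot\nu)\alpha:\|\om\|\le M\}}\}$ recovers a bound uniform in $\rho$); and (ii) the noncommutative absolute-continuity estimate $\Psi_\epsilon(\nu)\to 0$, which is precisely where normality of $\nu$ and semifiniteness of $\tau$ are used.
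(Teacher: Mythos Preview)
Your argument is correct and is essentially the paper's own proof, repackaged through the auxiliary function $\Psi_\epsilon$: the paper likewise shows (using a bounded $L^1(\tau)$-approximant to $\nu$ rather than the full Radon--Nikodym derivative $h$) that $\nu$ is uniformly small on $\tau$-small positive contractions, then produces a projection of arbitrarily small $\tau$-value from non-atomicity and derives the same contradiction via SAT and $\alpha$-invariance of $\tau$. On your concern (i), no Baire-category detour is needed: the ``states'' form of SAT follows directly from Proposition~\ref{isometry}, since a unital positive map that is isometric on self-adjoints preserves the supremum of the spectrum (shift $x$ to $x+\|x\|\cdot 1\ge 0$), and then a Hahn--Banach separation shows every normal state on $N$ lies in the norm closure of $\{(\omega\otimes\nu)\alpha:\omega\ \text{a normal state on}\ \G\}$.
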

\begin{proof}
Let $\tau$ be an $\alpha$-invariant n.s.f. tracial weight and $\nu$ be a  faithful SAT state on $N$. 

We claim that there is $\varepsilon>0$ such that $\nu(x)<\frac{1}{2}$ for all positive $x$ in the unit ball of $N$ with $\tau(x)<\varepsilon$. Indeed, since $\tau$ is a trace, the space $\{\tau(a \,\cdot) : a\in N\}$ is dense in the predual $N_*$ \cite{Dix53}, and therefore since $\nu$ is normal, there is a positive element $a\in N$ such that $\tau(a)<\infty$ and $\|\tau(a \,\cdot) - \nu(\cdot)\|_{N_*} <\frac14$. Let $\varepsilon=\frac{1}{4\|a\|}$. If $0\leq x\leq 1$ and $\tau(x)<\varepsilon$, then
%\[
$\nu(x) = \nu(x) - \tau(a x) + \tau(a x) \le \frac14 + \|a\| \tau(x) <\frac12 ,$
%\]
and the claim follows.

Now, for the sake of contradiction, assume $N$ is not purely atomic, and let $q\in N$ be a non-zero projection with no minimal subprojections. Since $\tau$ is semifinite, by passing to a subprojection, we may further assume that $\tau(q)<\infty$. Then $q$ contains a non-zero projection $p$ such that $\tau(p)<\varepsilon$. To see this, note that $qMq$ is a finite von Neumann algebra. If its center $Z(qMq)$ is not purely atomic, then it contains a non-zero projection $q_0$ such that $q_0Z(qMq)q_0$ contains no minimal projection, hence is the $L^\infty$ of a probability space $(X, \mu)$ with no atoms (where $\mu$ is defined by the restriction of $\tau$), and in particular containing measurable subsets of arbitrary small non-zero measures. This implies $q_0Z(qMq)q_0$ contains non-zero projections $p$ with $\tau(p)<\varepsilon$. If $Z(qMq)$ is purely atomic, then $qMq$ is a direct sum of finite factors, which have to be type ${\rm II}_1$ since $qMq$ does not contain any minimal projections. It is a well-known fact that every ${\rm II}_1$ factor contains projections with arbitrary non-zero small trace values (see e.g. \cite[Proposition 4.1.6]{DP}).
In either case, $q$ contains a non-zero projection $p$ such that $\tau(p)<\varepsilon$. 
Let $\rho(\cdot) = \frac{1}{\tau(p)}\tau(p\,\cdot)$. Then $\rho(1)=1$, and therefore $\rho$ is a normal state on $N$.
Since $\nu$ is SAT, there is a state $\om\in L^1(\G)$ such that
%\[
$| \rho(p) - (\om\otimes \nu)\alpha(p) | < \frac12$,
%\]
which implies $\nu\big((\om\otimes \id)\alpha(p)\big) > \frac12$.
On the other hand, since $\tau$ is $\alpha$-invariant, we have
%\[
$\tau\big((\om\otimes \id)\alpha(p)\big) = \tau(p)< \varepsilon$.
%\]
This contradicts the fact we established at the beginning of the argument, and hence completes the proof.
\end{proof}

\section{Unique stationary SAT states}\label{uss}

The importance of unique stationarity in 
measurable boundaries was known from the beginning of the theory. 
The study of these systems was initiated by Furstenberg in~\cite{Furs}, where they were called \emph{$\mu$-proximal actions}. They were further studied in~\cite{Mar, GW}.
Unique stationarity in the setting of group actions on $C^*$-algebras was studied in \cite{HK1}, where a characterization of $C^*$-simplicity was proved in terms of unique stationarity of the canonical trace.
So, it should not come as a surprise that actions admitting unique stationary SAT states behave similarly. 
This section is concerned with such actions of discrete quantum groups. 

Let $\G$ be a lcq group, let $\mu\in C_0(\G)^*$ be a state, and let $\alpha$ be an action of $\G$ on a unital $C^*$-algebra $\sA$. A state $\nu$ on $\sA$ is called \emph{$\mu$-stationary} if $(\mu\otimes\nu)\alpha=\nu$. %The definition of $\mu$-stationary states on $\G$-von Nemann algebras define similarly.  
A standard fixed-point argument implies %like \cite[Proposition 4.2]{HK1} will show 
that for every state $\mu\in C_0(\G)^*$, the set of $\mu$-stationary states on $\sA$ is non-empty; indeed, for every state $\nu$ on $\sA$, $(\mu\otimes\nu)\alpha\in C_0(\G)^*$ is also a state, and the map $\nu\mapsto (\mu\otimes\nu)\alpha$ is a contractive continuous affine map on the compact convex space of states on $\sA$, hence has a fixed point by Markov--Kakutani fixed point theorem. Obviously any such fixed point is a $\mu$-stationary state on $\sA$.

%%%%%%%
%%%%%%%
%\iffalse{
%%%%%%%

\begin{lemma}\label{lem:rigid}
Let $\G$ and $\H$ be lcq groups, let $N$ be a von Neumann algebra equipped with a (left or right) action $\alpha$ of $\G$ and a (left or right) action $\beta$ of $\H$, and let $\sA$ be both $\G$-invariant and $\H$-invariant unital $C^*$-subalgebra of $N$.

Assume that for some state $\mu\in\LO$, $N$ admits a $\mu$-stationary normal $\G$-SAT state $\nu$ %and a $\G$- and $\H$-invariant unital $C^*$-subalgebra $\sA$ 
such that the restriction of $\nu$ to $\sA$ is the unique $\mu$-stationary $\H$-invariant state on $\sA$.

Then, every ucp map $\Phi:N\to N$ that is both $\G$-equivariant and $\H$-equivariant, restricts to identity on $\sA$.
\end{lemma}

\begin{proof}
Let $\Phi:N\to N$ be ucp and both $\G$-equivariant and $\H$-equivariant, and $\Phi^*: N^*\to N^*$ its adjoint map. Since $\Phi$ is $\G$-equivariant, by $\mu$-stationarity of $\nu$, for every $a\in \sA$ we get
\begin{align*}
(\mu\otimes\Phi^*(\nu))\alpha(a)
&=\nu\big(\Phi((\mu\otimes\id)\alpha (a))\big)
=\nu\big((\mu\otimes\id)\alpha (\Phi(a))\big)
\\&= (\mu\otimes\nu)\alpha(\Phi(a))=\nu(\Phi(a))) = \Phi^*(\nu)\big((a)\big),
\end{align*}
which shows that the restriction of $\Phi^*(\nu)$ to $\sA$ is $\mu$-stationary. % state on $\sA$.
Arguing similarly, we also see that $\Phi^*(\nu)$ restricts to an $\H$-invariant state on $\sA$.
%Hence $\Phi^*(\nu)= \nu|_\sA$ b
By the uniqueness assumption, the restrictions of $\Phi^*(\nu)$ and $\nu$ to $\sA$ coincide.
%By $\G$-equivariance
Thus, for every $\omega\in \LO$ and $a\in \sA$ we have
\begin{align*}
&[\Phi^*((\omega\otimes \nu)\alpha)](a)=(\omega\otimes \nu)\alpha(\Phi(a))
=(\omega\otimes \Phi^*(\nu))\alpha(a)\\=\ &\Phi^*(\nu)\big((\omega\otimes \id)\alpha(a)\big)=\nu\big((\omega\otimes \id)\alpha(a)\big)=((\omega\otimes \nu)\alpha(a)) ,
\end{align*}
where in the second equality we used $\G$-equivariance of $\Phi$, and in the forth equality we used the fact that $(\omega\otimes \id)\alpha(a)\in \sA$ which follows from $\G$-invariance of $\sA$.
Since $\nu$ is $\G$-SAT, the space $\{(\omega\otimes \nu)\alpha \st \om\in\LO\}$ is norm-dense in $N_*$, and therefore,
$\rho(\Phi(a)) = \rho(a)$ for all $\rho\in N_*$ and $a\in \sA$. Since the restrictions to $\sA$ of functionals in $N_*$ form a weak* dense subset of $\sA^*$, we conclude $\Phi(a)=a$ for all $a\in \sA$.
\end{proof}

%%%%%%%
%%%%%%%
%}\fi
%%%%%%%

The above lemma generalizes \cite[Theorem 3.4]{HK}, where a similar result proved for measurable boundary actions with unique stationary compact models. The proof of the latter is based on a result of Margulis \cite[Corollary 2.10(a)]{Mar}, which allows to pass from morphisms at the level of function algebras to maps at the level of underlying sets, and then use rigidity properties of boundaries. Such an argument is obviously not very quantizable: there are no underlying sets involved. Our result above shows that it is indeed the SAT property of the boundary that is behind the above rigidity. And the proof is the manifestation of our remarks in the introduction that the operator theoretic nature of the SAT property can be a significant advantage compared to boundary actions in the noncommutative setting.

Let $\G$ be a lcq group and $M$ be a (left or right) $\G$-von Neumann algebra. We say $M$ is $\G$-injective if for every $\G$-von Neumann algebra $N$ containing $M$ as a $\G$-invariant von Neumann subalgebra, there is a $\G$-equivariant conditional expectation $P: N\to M$.

Let $M\subset N$ be an inclusion of von Neumann algebras.
By an \emph{intermediate von Neumann subalgebra} of the inclusion %$M\subset N$ of von Neumann algebras 
we mean a von Neumann algebra $Q$ such that $M\subset Q\subset N$. We say $Q$ is proper if $Q\neq N$.

\begin{theorem}\label{main}
Let $\G$ be a discrete quantum group of Kac type, let $\hh{\mathbb{H}}$ be a closed quantum subgroup of $\hh\G$, and let $N$ be a $(\G,\hh{\mathbb{H}})$-Yetter-Drinfeld von Neumann algebra equipped with left actions $\alpha$ of $\G$ and $\lambda$ of $\hh\h$. 
Assume that $N$ admits a $\G$-SAT state $\nu$ and a weak* dense unital $C^*$-subalgebra $\sA$ such that
\begin{itemize}
\item[(i)\,\,] 
$\nu$ is $\hh\H$-invariant;
\item[(ii)\,] 
$\nu$ is $\mu$-stationary for some state $\mu\in L^1(\G)$;
\item[(iii)]
$\sA$ is both $\G$-invariant and $\hh{\H}$-invariant;
\item[(iv)]
the restriction of $\nu$ to ${\sA}$ is the unique $\mu$-stationary $\hh{\H}$-invariant state on $\sA$.
\end{itemize}
Then the inclusion $\LLL\subset \G\ltimes_{\alpha} N$ does not admit any proper ${\hh{\mathbb{H}}}$-injective intermediate von Neumann subalgebras. 
\end{theorem}
\begin{proof}
Assume $M$ is an ${\hh{\mathbb{H}}}$-injective von Neumann algebra with $\LLL\subseteq M \subseteq \G\ltimes_{\alpha} N$. We will show $M=\G\ltimes_{\alpha} N$. By Lemma~\ref{lem:GHcrossed} the crossed product $\G\ltimes_{\alpha} N$ is a $(\G,{\H})$-Yetter-Drinfeld von Neumann algebra, and $\alpha(N)$ is both $\G$ and $\hh\h$-invariant in $\G\ltimes_{\alpha} N$. 
By ${\hh{\mathbb{H}}}$-injectivity of $M$, there exists an ${\hh{\mathbb{H}}}$-equivariant conditional expectation 
$P:\G\ltimes_{\alpha} N\to M$.

Since $\LLL\subseteq M$ and
$W\in \DD\vtp \LLL$, it follows that $W_{12}$ is in the multiplicative domain of $\id\otimes P$, hence
%\begin{align*}
$(\id\otimes P)\tilde\alpha(y)=(\id\otimes P)(W^*_{12}\, y_{23}\, W_{12})
=%\\&=
W^*_{12}\,(P(y)_{23})\, W_{12}
=%\\&=
\tilde\alpha(P(y))$
%\end{align*}
for all $y\in \G\ltimes_{\alpha} N$, 
where $\tilde\alpha$ is the action of $\G$ on $\G\ltimes_{\alpha} N$ defined in \eqref{beta}.
This implies that $M$ is $\G$-invariant and $P$ is $\G$-equivariant. 

Let $\sA\subset N$ be as in the assumptions. 
%%%%%%%oct8
Recall that $E:\G\ltimes_{\alpha}N\to \alpha(N)$ denotes the canonical conditional expectation defined in Section~\ref{sect2}.
%%%%%%%oct8
%
The map $E\circ P|_{\alpha(\sA)}\colon\alpha(\sA)\to \alpha(N)$ is ucp and both $\G$-equivariant and $\hh{\mathbb{H}}$-equivariant, since both $P$ and $E$ are such, by the above and Lemma~\ref{lem:act-ext}. 
By Remark~\ref{rem3}, the map $\Phi:=\alpha^{-1}\circ E\circ P\circ \alpha\colon N\to N$ is also $\G$-equivariant and $\hh{\mathbb{H}}$-equivariant, hence restricts to identity by Lemma~\ref{lem:rigid}.
%
%%%%%%%
We conclude that 
$E\circ P$ restricts to identity on $\alpha(\sA)$.
Using the module property of the conditional expectation $E$, and applying the Schwartz inequality to the ucp map $P$, we get for all $x\in\alpha(\sA)$,
\[\begin{split}
E\big((x-P(x))^*(x-P(x))\big) &= E\big((x^*x-P(x)^*x-x^*P(x)+P(x)^*P(x)\big) \big) \\&\le E(x^*x)-E(P(x^*))x-x^*E(P(x))+E(P(x^*x)) \\&= x^*x-x^*x-x^*x+x^*x = 0 .
\end{split}\]
Since $E$ is faithful, it follows that $P$ restricts to the identity map on $\alpha(\sA)$.
Hence, $\alpha(\sA)\subseteq M$, and therefore $\alpha(N)\subseteq M$ since $\sA$ is weak* dense in $N$, and $M$ is weak* closed.

We also have $\LLL\subseteq M$ by the assumption. Since $\G\ltimes_{\alpha} N$ is generated by $\LLL$ and $\alpha(N)$, it follows $M=\G\ltimes_{\alpha} N$.%\\[1.2ex]
%\item 
\end{proof}

\section{Examples and applications}\label{ex}
In this section we apply Theorem~\ref{main} to certain classes of discrete quantum groups for which concrete realization of Poisson boundaries with uniquely stationary models have been obtained.

First, let us recall some standard facts about noncommutative Poisson boundaries of discrete quantum groups $\G$. %Let $\G$ be a discrete quantum group. 
We denote the set of all equivalence classes of  irreducible representations of a compact quantum group $\hh\G$ by $\irr(\hh\G)$. For every $s\in\irr(\hh\G)$ we denote by $H_s$ the corresponding Hilbert space, by $U_s\in B(H_s)\otimes C(\hh\G)$ the unitary corepresentation and by $\pi_s:\DD\to B(H_s)$ the corresponding representation of $\DD$. 
There is a unique state $\phi_s:\DD\to\CC$ satisfying 
%\begin{equation}\label{phis}
$\phi_s(x)1_{H_s} = (\id\otimes \varphi^{\hh\G})(U^*_s(\pi_s(x)\otimes 1) U_{s})$
%\end{equation}
for all $x\in\DD$ (see e.g. \cite[Notation 1.7 and Notation 1.11]{VV}). 
Let $\mu$ be a  probability measure on $\irr(\hh\G)$, and define %, i.e.  $\irr(\hh\G)=\cup_{i=1}^{\infty} \supp (\mu\frac{num}{den})$.
$\phi_\mu:=\sum_{s\in\irr{\hh\G}}\mu(s)\phi_s$.
Then $\phi_\mu$ is a $\hh\G$-invariant normal state on $\DD$, and the map %, where $\phi_s$ is the state on $B(H_s)$ defined in \eqref{phis}. 
$P_{\phi_\mu}:\DD\to\DD$ defined by $x\mapsto (\id\otimes\phi_\mu)\Delta^\G(x)$, restricts to a Markov operator on $\ell^\infty(\irr(\hh\G))$, hence yields a classical random walk on the set $\irr(\hh\G)$.  The probability measure $\mu$ is said to be generating if the corresponding random walk on $\irr(\hh\G)$ is irreducible, and it is said to be ergodic if the Poisson boundary of the random walk is trivial.

It is proved in \cite[Lemma 3.5]{HHN22}, that for ergodic generating probability measures $\mu$ on $\irr(\hh\G)$, the Poisson boundary $H^\infty(\G,\phi_\mu)$ is independent of the choice of $\mu$, and in fact, 
\[
H^\infty(\G,\phi_\mu)= H^\infty(\G):=\{ x\in\DD \st (\id\otimes\phi_s)\Delta^\G(x) = x \ \, \text{for all} \ \, s\in \irr(\hh\G)\} .
\] 
We refer the reader to \cite{HHN22} for the relevant definitions and further details.

\subsection{Orthogonal free quantum groups} 

First, we consider Van Daele and Wang's orthogonal free discrete quantum groups $\G = \FO_N$, the quantum group duals of the free orthogonal compact quantum groups $O_N^+=\hh\G$.

The quantum groups $\FO_N$ are of Kac type.
In \cite{VV} a Gromov boundary $C^*$-algebra was constructed for $\FO_N$, which was then shown in \cite{KKSV}, by means of a unique stationarity result, to be also a topological boundary in the sense of \cite[Definition 4.1]{KKSV}.

%%%%%%%
%%%%%%%
%%%%%%%

%%%%%%%
%%%%%%%
%%%%%%%
%%%%%%%

Let $M\subset N$ be an inclusion of von Neumann algebras. We say $N$ is a {minimal injective extension} of $M$, if $N$ is injective and no proper intermediate von Neumann subalgebra of the inclusion is injective.  We say $M$ is a {maximal injective} in $N$, if $M$ is injective and every von Neumann subalgebra of $N$ that contains $M$ properly is not injective.

\begin{theorem}\label{thm:ex-free-qg}
Let $\G = \FO_N$ be Van Daele and Wang's orthogonal free discrete quantum group with $N\geq 3$. Let $\theta$ be an n.s.f. weight on $H^\infty(\G)$, $H_\theta$ its GNS Hilbert space, and 
$U_\theta\in B(\ell^2(\G)\otimes H_\theta)$ the unitary implementation of the action of $\G$ on $H^\infty(\G)$. 
Then the following hold.
\begin{enumerate}
\item
$\G\ltimes_{\Delta^{\G}} H^\infty(\G)$ is a minimal injective extension of $L^\infty(O^+_N)$.
\item
$U_\theta\,\big(\G^{\rm op}\ltimes_{({\Delta^{\G})}^\prime} H^\infty(\G)^\prime\big)\,U_\theta^* $ is a maximal injective in $L^\infty(O^+_N)^\prime\vtp B(H_\theta)$. 
\end{enumerate}
\end{theorem}
\begin{proof}
(1)\ As remarked in Section~\ref{sec:SAT}, the crossed product von Neumann algebra $\G\ltimes_{\Delta^{\G}} H^\infty(\G)$ is injective. Thus, it suffices to check that the assumptions of Theorem~\ref{main} hold for the action of $\G$ on $H^\infty(\G)$ and the trivial choice of $\hh{\H}=\mathbb{C}$.

%The quantum group $\G$ is of Kac type. 
Let $\B_\infty$ be the Gromov boundary of $\G$ in the sense of Vaes--Vergnioux~\cite{VV}, which is a unital $\G$-$C^*$-algebra. 
Let $\mu$ be an ergodic generating probability measure on $\irr(O^+_N)$.
By \cite[Theorem 7.2]{KKSV} there is a unique $\mu$-stationary state $\nu$ on $\B_\infty$, and by \cite[Theorem 5.6]{VV} there is a canonical $\G$-von Neumann isomorphism $\pi_\nu(\B_\infty)^{\prime\prime} \cong H^\infty(\G)$. Since the restriction $\mu\!\!\mid_{H^\infty(\G)}$ is SAT, $\nu$ is SAT as a normal state on $\pi_\nu(\B_\infty)^{\prime\prime}$. Thus, the conditions of Theorem~\ref{main} hold for the $\G$-von Neumann algebra $\pi_\nu(\B_\infty)^{\prime\prime}$, and consequently for $H^\infty(\G)$. Hence, the first assertion follows.
\\[1ex]
(2)\ Let
\begin{equation}\label{eq:vn-incls}
U_\theta(\G^{\rm op}\ltimes_{({\Delta^{\G})}^\prime} H^\infty(\G)^\prime)U_\theta^* \subseteq M \subseteq L^\infty(O^+_N)^\prime\vtp B(H_\theta)
\end{equation}
be inclusions of von Neumann algebras, and assume that $M$ is injective. By Theorem~\ref{commutant} the commutant of $\G\ltimes_{\Delta^{\G}} H^\infty(\G)$ in $B(L^2(O^+_N) \otimes H_\theta)$ is $U_\theta(\G^{\text{op}}\ltimes_{({\Delta^{\G})}^\prime} H^\infty(\G)^\prime)U_\theta^*$. Hence, taking commutants inside $B(L^2(O^+_N) \otimes H_\theta)$ of the von Neumann algebras in the inclusions \eqref{eq:vn-incls}, we get $L^\infty(O^+_N) \subseteq M^\prime \subseteq \G\ltimes _{\Delta^{\G}} H^\infty(\G)$, and $M^\prime$ is injective. Now part (1) above implies $M^\prime = \G\ltimes_{\Delta^{\G}} H^\infty(\G)$. Taking commutant in $B(L^2(O^+_N) \otimes H_\theta)$ once again, we conclude that $M = U_\theta(\G^{\text{op}}\ltimes_{{(\Delta^{\G})}^\prime} H^\infty(\G)^\prime)U_\theta^*$.
\end{proof}

\subsection{$\hh\G$-injectivity}%{Drinfeld Doubles}
We prove a rigidity property for $\hh\G$-injective extensions of $\LLL$, similar to part (1) of Theorem~\ref{thm:ex-free-qg}, but for a larger class of discrete quantum groups $\G$.

\begin{theorem}\label{thm:Drinf-doub}
Let $\G$ be a discrete quantum group of Kac type. Assume that $\irr(\hh\G)$ admits a generating ergodic probability measure. Then the crossed product $\G\ltimes_{\Delta^{\G}} H^\infty (\G)$ is the minimal $\hh\G$-injective von Neumann algebra extension of $\LLL$.
%\end{enumerate}
\end{theorem}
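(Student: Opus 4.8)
The plan is to mimic the structure of the proof of Theorem~\ref{main}(1), replacing injectivity of the crossed product by $\hh\G$-injectivity and replacing the absolute conditional expectation with a $\hh\G$-equivariant one. First I would record the relevant inputs: by \cite[Theorem 5.3 \& Corollary 7.7]{MO} the crossed product $\G\ltimes H^\infty(\G)$ is $\hh\G$-injective (with respect to the action $\hh\alpha$ of \eqref{act2}), and by the work of Habbestad--Hataishi--Neshveyev~\cite{Nesh} on Drinfeld doubles one obtains, just as in the proof of Theorem~\ref{thm:ex-free-qg}, a unital $\G$-invariant (in fact $D(\G)$-invariant) weak* dense $C^*$-subalgebra $\sA\subseteq H^\infty(\G)$ carrying a state $\nu|_\sA$ which is the unique $\phi_\mu$-stationary state on $\sA$; the ambient normal state $\nu=\varepsilon^\G|_{H^\infty(\G)}$ is SAT by the remarks following Proposition~\ref{isometry} and is $\phi_\mu$-stationary since $\varepsilon^\G$ is the counit. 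Thus hypotheses (ii) and (iii) of Theorem~\ref{main} hold verbatim; the only change is that (i) is weakened.

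Next I would run the argument of Theorem~\ref{main}(1). Suppose $\LLL\subseteq M\subseteq \G\ltimes H^\infty(\G)$ with $M$ $\hh\G$-injective. Here one must be careful about which action of which quantum group acts on what. The crossed product carries a left $\G$-action $\beta$ (via \eqref{beta}) and a right $\hh\G$-action $\hh\alpha$ (via \eqref{act2}); the $\hh\G$-injectivity of $M$ should produce a $\hh\G$-equivariant conditional expectation $P\colon \G\ltimes H^\infty(\G)\to M$. I would then want to upgrade $P$ to something $\beta$-equivariant as well: since $\LLL\subseteq M$ and $\ww^\G\in\DD\vtp\LLL\subseteq\DD\vtp M$, the unitary $\ww^\G_{12}$ lies in the multiplicative domain of $\id\otimes P$, so exactly the computation in the proof of Theorem~\ref{main}(1) shows $(\id\otimes P)\circ\beta=\beta\circ P$, i.e.\ $P$ is automatically $\G$-equivariant. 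Then, as in Theorem~\ref{main}(1), set $\Phi:=E\circ P|_\sA\colon\sA\to H^\infty(\G)$, where $E$ is the canonical conditional expectation, which is $\G$-equivariant by Lemma~\ref{lem:cond-eq} since $\G$ is Kac; hence $\Phi$ is $\G$-equivariant u.c.p., so $\Phi=\id$ by Proposition~\ref{rigid}. Faithfulness of $E$ forces $P|_\sA=\id$ (\cite[Lemma 3.3]{ha}), hence $\sA\subseteq M$, hence $H^\infty(\G)\subseteq M$ by weak* closedness, and combined with $\LLL\subseteq M$ and the fact that $\G\ltimes H^\infty(\G)$ is generated by $\LLL$ and $H^\infty(\G)$, we get $M=\G\ltimes H^\infty(\G)$. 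Finally one observes $\G\ltimes H^\infty(\G)$ is itself $\hh\G$-injective, so it really is the \emph{minimal} $\hh\G$-injective extension of $\LLL$, completing the proof.

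The main obstacle I expect is the compatibility bookkeeping in the middle step: one needs a notion of $\hh\G$-injectivity for the inclusion $M\subseteq \G\ltimes H^\infty(\G)$ that (a) is implied by the hypothesis of the theorem for $\G\ltimes H^\infty(\G)$ itself via \cite{MO}, and (b) yields a conditional expectation $P$ that is \emph{at least} $\G$-equivariant for $\beta$ so that Proposition~\ref{rigid} applies. The cleanest route is probably to phrase everything in terms of the Drinfeld double $D(\G)$: the crossed product $\G\ltimes H^\infty(\G)$ is naturally a $D(\G)$-von Neumann algebra (carrying commuting $\G$- and $\hh\G$-actions), "$\hh\G$-injective" should be read relative to this structure, and the intermediate $M$ inherits the $\G$-action so that $P$ can be taken $\G$-equivariant by the multiplicative-domain argument above even though a priori it is only guaranteed $\hh\G$-equivariant. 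One should double-check that $\LLL=L^\infty(\hh\G)$ sits inside $\G\ltimes H^\infty(\G)$ as a $\G$-invariant (for $\beta$) subalgebra — this is exactly the statement that $\beta|_{\alpha(N)}=\id\otimes\alpha$ fails on the $\LLL$ part, so care is needed — in fact $\beta$ does not fix $\LLL$ pointwise, which is precisely why the multiplicative-domain trick with $\ww^\G_{12}$ is the right tool and gives $\G$-equivariance of $P$ without needing $\LLL$ to be $\beta$-fixed. Once this structural point is pinned down, the rest is a routine transcription of Theorem~\ref{main}(1).
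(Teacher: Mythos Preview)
There is a genuine gap in the uniqueness input you extract from \cite{Nesh}. You assert that the $D(\G)$-invariant $C^*$-subalgebra $\sA\subseteq H^\infty(\G)$ carries a state $\nu|_\sA$ which is the \emph{unique} $\phi_\mu$-stationary state on $\sA$, ``just as in the proof of Theorem~\ref{thm:ex-free-qg}.'' But the $\FO_N$ case is special: there the absolute unique stationarity comes from \cite{KKSV}. In the general Kac setting, the results of \cite{Nesh} (Proposition~3.3, Theorems~3.7 and~3.10) only give that $\nu|_\sA$ is the unique \emph{$\hh\G$-invariant} $\phi_\mu$-stationary state. With this weaker uniqueness, Proposition~\ref{rigid} does not apply as stated: in its proof one needs $\Phi^*(\nu)=\nu|_\sA$, and $\mu$-stationarity of $\Phi^*(\nu)$ alone is no longer enough to force this.

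The fix is precisely to use the $\hh\G$-equivariance you obtained but then discarded. The conditional expectation $P$ is $\hh\G$-equivariant by hypothesis, and the canonical conditional expectation $E$ is $\hh\G$-equivariant as well (the $\hh\G$-action $\hh\alpha$ restricts trivially to $\alpha(N)$). Hence $\Phi=E\circ P|_\sA$ is $D(\G)$-equivariant, not merely $\G$-equivariant, so $\Phi^*(\nu)$ is automatically $\hh\G$-invariant, and the weaker uniqueness from \cite{Nesh} now suffices to conclude $\Phi^*(\nu)=\nu|_\sA$. This is exactly the ``modified version of Proposition~\ref{rigid}'' the paper invokes. Note that if your stronger uniqueness claim were available, your argument would never use the $\hh\G$-equivariance of $P$ at all and would in fact prove that $\G\ltimes H^\infty(\G)$ is the minimal \emph{injective} extension of $\LLL$ for every Kac $\G$ with a generating ergodic measure---a statement the paper only establishes for $\FO_N$.
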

\begin{proof}
As noted in \cite[Section 3]{HHN22}, the von Neumann algebra $H^\infty (\G)$ is a Yetter-Drinfeld $\G$-von Neumann algebra, that is a $(\G,\hh{\G})$-Yetter-Drinfeld von Neumann algebra in the terminology of our Definition~\ref{defYD}.
The fact that $\G\ltimes_{\Delta^{\G}} H^\infty (\G)$ is $\hh\G$-injective follows from \cite[Theorem 5.4 \& Corollary 7.7]{MO}. %{\color{red} (check)}.
We check that the conditions of Theorem~\ref{main} hold for the above actions of $\G$ and $\hh{\G}$ on $H^\infty(\G,\mu)$.
Let $\mu\in {\rm Prob}({\rm Irr}(\hh\G))$ be generating and ergodic. %, and let $D(\G)$ be the Drinfeld double associated to $\G$.  
The combination of \cite[Proposition 3.3, Theorem 3.7, Theorem 3.10]{HHN22} implies that $H^\infty(\G)$ %is a $D(\G)$-von Neumann algebra containing 
contains a weak* dense unital $C^*$-subalgebra $\sA$ that is both $\G$ and $\hh{\G}$-invariant, and with the property that $\sA$ admits a unique $\hh\G$-invariant $\mu$-stationary state $\nu$ such that there is a canonical $\G$-von Neumann isomorphism $\pi_\nu(\sA)^{\prime\prime} \cong H^\infty(\G)$. %
Thus, the conditions of Theorem~\ref{main} hold, and we conclude $\G\ltimes_{\Delta^{\G}} H^\infty (\G)$ is the minimal $\hh\G$-injective von Neumann algebra extension of $\LLL$.
%%%
%%%%%%%
%
\end{proof}

Examples of discrete quantum groups $\G$ for which $\irr(\hh\G)$ admits a generating ergodic probability measure include those where $\hh\G$ is monoidally equivalent to a coamenable compact quantum group (see the remarks on pages 327 and 332 of \cite{HHN22}).
For concrete example of a discrete Kac algebra with this property, beyond the case of orthogonal free discrete quantum groups, see the recent paper \cite{VR24}.

\end{document}